\newcommand{\add}{\mathrm{add}}
\newcommand{\cof}{\mathrm{cof}}
\newcommand{\cov}{\mathrm{cov}}
\newcommand{\cf}{\mathrm{cf}}
\newcommand{\e}{\varepsilon}
\newcommand{\w}{\omega}
\newcommand{\E}{\mathcal E}
\newcommand{\Ra}{\Rightarrow}
\newcommand{\A}{\mathcal A}
\newcommand{\bgamma}{\overset{\leftrightarrow}\gamma}
\newcommand{\cbox}{\coprod}
\newcommand{\I}{\mathcal I}
\newcommand{\IR}{\mathbb R}
\newtheorem{theorem}{Theorem}[section]
\newtheorem{proposition}[theorem]{Proposition}
\newtheorem{lemma}[theorem]{Lemma}
\newtheorem{corollary}[theorem]{Corollary}
\theoremstyle{definition}
\newtheorem{remark}[theorem]{Remark}
\newtheorem{example}[theorem]{Example}
\title[Classifying homogeneous cellular ordinal balleans up to coarse equivalence]{Classifying homogeneous cellular ordinal balleans\\ up to coarse equivalence}
\author{Taras Banakh, Igor Protasov, Du\v san Repov\v s, and Sergii Slobodianiuk}
\address{T.Banakh: Ivan Franko National University of Lviv (Ukraine) and Jan Kochanowski University in Kielce (Poland)}
\email{t.o.banakh@gmail.com}
\address{I.Protasov, S.Slobodianiuk: Faculty of Cybernetics, Taras Shevchenko National University of Kyiv, Ukraine}
\email{i.v.protasov@gmail.com, slobodianiuk@yandex.ru}
\address{D.Repov\v s: Faculty of Education, and
Faculty of Mathematics and Physics,
University of Ljubljana,
Kardeljeva Pl.~16,
Ljubljana, Slovenia 1000}
\email{dusan.repovs@guest.arnes.si}
\subjclass{54E35; 51F99}
\keywords{Coarse space, ballean, cellular ballean, ordinal ballean, homogeneous ballean, coarse equivalence, cellular entourage, asymptotic dimension, Cantor macro-cube}
\begin{document}
\begin{abstract} For every ballean $X$ we introduce two cardinal characteristics $\cov^\flat(X)$ and $\cov^\sharp(X)$ describing the capacity of balls in $X$. We observe that these cardinal characteristics are invariant under coarse equivalence and prove that two cellular ordinal balleans $X,Y$ are coarsely equivalent if $\cof(X)=\cof(Y)$ and $\cov^\flat(X)=\cov^\sharp(X)=\cov^\flat(Y)=\cov^\sharp(Y)$. This result implies  that a cellular ordinal ballean $X$ is homogeneous if and only if $\cov^\flat(X)=\cov^\sharp(X)$. Moreover, two homogeneous cellular ordinal balleans $X,Y$ are coarsely equivalent if and only if $\cof(X)=\cof(Y)$ and $\cov^\sharp(X)=\cov^\sharp(Y)$ if and only if each of these balleans coarsely embeds into the other ballean. This means that the coarse structure of a  homogeneous cellular ordinal ballean $X$ is fully determined by the values of the cardinals $\cof(X)$ and  $\cov^\sharp(X)$. For every limit ordinal $\gamma$ we shall define a ballean $2^{<\gamma}$ (called the Cantor macro-cube), which in the class of cellular ordinal balleans of cofinality $\cf(\gamma)$ plays a role analogous to the role of the Cantor cube $2^{\kappa}$ in the class of zero-dimensional compact Hausdorff spaces. We shall also present a characterization of balleans which are coarsely equivalent to $2^{<\gamma}$. This characterization can be considered as an asymptotic analogue of Brouwer's characterization of the Cantor cube $2^\w$.
\end{abstract}

\maketitle

\section*{Introduction}

In this paper we study the structure of ordinal balleans, i.e., balleans that have well-ordered base of their coarse structure. Such balleans were introduced by Protasov in \cite{P03}. Some basic facts about ordinal balleans are discussed in Section~\ref{s:ord}. The main result of the paper is presented in Section~\ref{s:crit} containing a criterion for recognizing coarsely equivalent cellular ordinal balleans. In Section~\ref{s:homo} we shall use this criterion to classify homogeneous cellular ordinal balleans up to coarse equivalence. In Section~\ref{s:cantor} we apply this criterion to characterize balleans $2^{<\gamma}$ (called Cantor macro-cubes), which are universal objects in the class of cellular ordinal balleans.  In Section~\ref{s:cantor} also we identify the natural coarse structure on additively indecomposable ordinals.

\section{Ordinal Balleans}\label{s:ord}

The notion of a ballean was introduced by Protasov \cite{PB} as a large scale counterpart of a uniform space and is a modification of the notion of a coarse space introduced by Roe \cite{Roe}. Both notions are defined as sets endowed with certain families of entourages.

By an {\em entourage} on a set $X$ we understand any reflexive symmetric relation $\e\subset X\times X$. This means that $\e$ contains the diagonal $\Delta_X=\{(x,y)\in X\times X: x=y\}$ of $X\times X$ and is symmetric in the sense that $\e=\e^{-1}$ where $\e^{-1}=\{(y,x)\in X\times X: (x,y)\in \e\}$. An entourage $\e\subset X\times X$ will be called {\em cellular} if it is transitive, i.e., it is an equivalence relation on $X$.

Each entourage $\e\subset X\times X$ determines a cover $\{B(x,\e)\colon x\in X\}$ of $X$ by $\e$-balls $B(x,\e)=\{y\in X:(x,y)\in\e\}$. It follows that $\e=\bigcup_{x\in X}\{x\}\times B(x,\e)=\bigcup_{x\in X}B(x,\e)\times\{x\}$, so the entourage $\e$ can be fully recovered from the system of balls $\{B(x,\e):x\in\e\}$. For a subset $A\subset X$ we let $B(A,\e)=\bigcup_{a\in A}B(a,\e)$ denote the $\e$-neighborhood of $A$.

A {\em ballean} is a pair $(X,\mathcal E_X)$ consisting of a set $X$ and a family $\E_X$ of entourages on $X$ (called the {\em set of radii\/}) such that $\bigcup\E_X=X\times X$ and for any entourages $\e,\delta\in\E_X$ their composition $$\e\circ\delta=\{(x,z)\in X\times X:\exists y\in X\;(x,y)\in\e,\;(y,z)\in\delta\}$$ is contained in some entourage $\eta\in\E_X$. A ballean $(X,\E_X)$ is called a {\em coarse space} if the family $\mathcal E_X$ is closed under taking subentourages, i.e., for any $\e\in\E_X$ any entourage $\delta\subset\e$ belongs to $\E_X$.
In this case the set of radii $\E_X$ is called a {\em coarse structure} on $X$.
 Each set of radii $\E_X$ can be completes to the coarse structure ${\downarrow}\E_X$ consisting of all possible subentourages $\delta\subset \e\in\E_X$. In this case $\E_X$ is called a {\em base} of the coarse structure ${\downarrow}\E_X$. So, balleans can be considered as coarse spaces with a fixed base of their coarse structure. Coarse spaces and coarse structures were introduced by Roe \cite{Roe}.

Each subset $A\subset X$ of a ballean $(X,\E_X)$ carries the induced ballean structure $\E_A=\{\e\cap A^2:\e\in\E_X\}$. The ballean $(A,\E_A)$ will be called a {\em subballean} of $(X,\E_X)$.

Now we consider some examples of balleans.

\begin{example} Each metric space $(X,d)$ carries a canonical ballean structure $\E_X=\{\Delta_\e\}_{\e\in\IR_+}$ consisting of the entourages
$$\Delta_\e=\{(x,y)\in X\times X:d(x,y)\le \e\}$$parametrized by the set $\IR_+=[0,\infty)$ of non-negative real numbers. The ballean structure $\E_X=\{\Delta_\e\}_{\e\in\IR_+}$ generates the coarse structure ${\downarrow}\E_X$ consisting of all subentourages of the entourages $\Delta_\e$, $\e\in\IR_+$.
\end{example}

A ballean $X$ is called {\em metrizable} if its coarse structure is generated by a suitable metric.
Metrizable balleans belong to the class of ordinal balleans. A ballean $\mathbf X=(X,\E_X)$ is defined to be {\em ordinal} if its coarse structure ${\downarrow}\E_X$ has a well-ordered base $\mathcal B\subset\E_X$.
The latter means that $\mathcal B$ can be enumerated as $\{B_\alpha\}_{\alpha<\kappa}$ for some ordinal $\kappa$ such that $B_\alpha\subset B_\beta$ for all ordinals $\alpha<\beta<\kappa$. Passing to a cofinal subset of $\kappa$, we can always assume that $\kappa$ is a regular cardinal, equal of the cofinality $\cof(\mathbf X)$. By definition, the {\em cofinality} $\cof(\mathbf X)$ of a ballean $\mathbf X=(X,\E_X)$ is equal to the smallest cardinality of a base of the coarse structure ${\downarrow}\E_X$.

Ordinal balleans can be characterized as balleans $\mathbf X=(X,\E_X)$ whose cofinality equals the additivity number
$$
\add(\mathbf X)=\min\big\{|\A|:\A\subset{\downarrow}\E_X,\;\textstyle{\bigcup\A}\notin{\downarrow}\E_X\setminus\{X\times X\}\big\}.
$$

\begin{proposition} A ballean $X$ is ordinal if and only if $\cof(X)=\add(X)$.
\end{proposition}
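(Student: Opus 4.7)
The plan is to prove the two directions separately. The easy inequality $\add(X) \leq \cof(X)$ holds unconditionally, as I would first observe: any base $\B \subseteq {\downarrow}\E_X$ of size $\cof(X)$ satisfies $\bigcup\B \supseteq \bigcup\E_X = X\times X$, so $\bigcup\B = X\times X \notin {\downarrow}\E_X\setminus\{X\times X\}$, and $\B$ witnesses $\add(X) \leq \cof(X)$.

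For $(\Rightarrow)$, I would assume $X$ is ordinal and, using the thinning argument mentioned in the excerpt, fix a well-ordered base $\{B_\alpha\}_{\alpha<\kappa}$ with $\kappa = \cof(X)$ a regular cardinal. Given $\A\subseteq{\downarrow}\E_X$ with $|\A|<\kappa$, each $\e\in\A$ sits in some $B_{\alpha(\e)}$; regularity of $\kappa$ forces $\alpha^*:=\sup_{\e\in\A}\alpha(\e)<\kappa$, so $\bigcup\A\subseteq B_{\alpha^*}\in{\downarrow}\E_X$. In the unbounded case $X\times X\notin{\downarrow}\E_X$ one further has $B_{\alpha^*}\neq X\times X$, so $\bigcup\A\in{\downarrow}\E_X\setminus\{X\times X\}$; this gives $\add(X)\geq\kappa$. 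The bounded case $X\times X\in{\downarrow}\E_X$ is trivial, with $\cof(X)=\add(X)=1$.

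For $(\Leftarrow)$, I would assume $\cof(X)=\add(X)=\kappa$ (infinite; the finite case is again trivial), fix any base $\{b_\alpha\}_{\alpha<\kappa}$ of ${\downarrow}\E_X$, and set $B_\alpha:=\bigcup_{\beta\leq\alpha}b_\beta$. Because $\kappa$ is a cardinal and $\alpha<\kappa$, one has $|\alpha+1|<\kappa=\add(X)$, and the defining property of $\add$ then forces $B_\alpha\in{\downarrow}\E_X$. The chain $\{B_\alpha\}_{\alpha<\kappa}$ is well ordered by inclusion and remains a base of ${\downarrow}\E_X$ because $b_\alpha\subseteq B_\alpha$, so $X$ is ordinal. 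I expect the main hazard to be logical rather than mathematical: the definition of $\add(X)$ treats $X\times X$ as an exceptional element of ${\downarrow}\E_X$, so I would need to track carefully whether the relevant union equals $X\times X$ in both directions of the argument, and this bookkeeping is what makes the characterization work uniformly for bounded and unbounded balleans.
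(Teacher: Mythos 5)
Your proof is correct and follows essentially the same route as the paper: the forward direction uses regularity of (a thinned version of) the well-ordered base to bound unions of small families, and the backward direction replaces an arbitrary base $\{b_\alpha\}_{\alpha<\kappa}$ by the cumulative unions $\bigcup_{\beta\le\alpha}b_\beta$, which lie in ${\downarrow}\E_X$ because $|\alpha+1|<\add(X)$. Your extra bookkeeping (the explicit proof of $\add(X)\le\cof(X)$ and the bounded/unbounded case distinction) only spells out steps the paper states without proof.
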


\begin{proof} Assuming that a ballean $(X,\E_X)$ is ordinal, fix a well-ordered base $\{\e_\alpha\}_{\alpha<\kappa}$ of the coarse structure ${\downarrow}\E_X$ of $\E_X$. Passing to a cofinal subsequence, we can assume that $\kappa=\cf(\kappa)$ is a regular cardinal. If $\kappa=1$, then the ballean $(X,\E_X)$ is bounded and hence for the entourage $X\times X\in{\downarrow}\E_X$ the family $\A=\{X\times X\}$ has cardinality $|\A|=1$ and $\bigcup\A=X\times X\notin\E_X\setminus\{X\times X\}$. Therefore, $\add(X)=1=\cof(X)$. So, we assume that the regular cardinal $\kappa$ is infinite and hence $\e_\alpha\ne X\times X$ for all $\alpha<\kappa$.
Since $\add(X)\le\cof(X)$, it suffices to check that $\cof(X)\le \add(X)$. The definition of the cardinal $\cof(X)$ implies that $\cof(X)\le\kappa$. The inequality $\add(X)\ge\kappa\ge\cof(X)$ will follow as soon as we check that for any family $\A\subset\E_X$ of cardinality $|\A|<\kappa$ we get $\bigcup\A\in{\downarrow}\E_X\setminus\{X\times X\}$. For every set $A\in\A$ find an ordinal $\alpha_A<\kappa$ such that $A\subset\e_{\alpha_A}$. By the regularity of the cardinal $\kappa$, the cardinal $\beta=\sup\{\alpha_A:A\in\A\}$ is strictly smaller than $\kappa$. Consequently, $A\subset\e_{\alpha_A}\subset\e_\beta$ for every $A\in\A$ and hence $\bigcup\A\subset\e_\beta$ and $\bigcup\A\in{\downarrow}\E_X\setminus\{X\times X\}$. This completes the proof of the equality $\add(X)=\cof(X)$ for ordinal balleans.

Now we shall prove that a ballean $(X,\E_X)$ is ordinal if $\add(X)=\cof(X)$. Fix any base $\{\e_\alpha\}_{\alpha<\cof(X)}\subset {\downarrow}\E_X$ of the coarse structure ${\downarrow}\E_X$ of $X$. By definition of the additivity number $\add(X)$, for every $\alpha<\cof(X)=\add(X)$, the union $\tilde \e_\alpha=\bigcup_{\beta\le\alpha}\e_\beta$ belongs to the coarse structure ${\downarrow}\E_X$.
Then $(\tilde \e_\alpha)_{\alpha<\cof(X)}$ is a well-ordered base of the coarse structure ${\downarrow}\E_X$, which means that the ballean $(X,\E_X)$ is ordinal.
\end{proof}

An important property of ordinal balleans of uncountable cofinality is their cellularity. A ballean $(X,\E_X)$ is called {\em cellular} if its coarse structure ${\downarrow}\E_X$ has a base consisting of cellular entourages (i.e., equivalence relations). It can be shown that a ballean $(X,\E_X)$ is cellular if and only if for every $\e\in\E_X$ the cellular entourage $\e^{<\w}=\bigcup_{n\in\w}\e^n$ belongs to the coarse structure ${\downarrow}\E_X$. Here $\e^0=\Delta_X$ and $\e^{n+1}=\e^n\circ\e$ for all $n\in\w$. This characterization implies the following simple fact.

\begin{proposition} Each ordinal ballean $\mathbf X=(X,\E_X)$ with uncountable cofinality $\cof(\mathbf X)$ is cellular.
\end{proposition}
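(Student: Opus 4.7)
The plan is to verify the characterization of cellularity given in the paragraph just before the proposition: it suffices to prove that for every $\e\in\E_X$ the entourage $\e^{<\w}=\bigcup_{n\in\w}\e^n$ lies in the coarse structure ${\downarrow}\E_X$.

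By the hypothesis that $\mathbf X$ is ordinal, fix a well-ordered base $\{\e_\alpha\}_{\alpha<\kappa}\subset{\downarrow}\E_X$ of its coarse structure, where $\kappa=\cof(\mathbf X)$ may be assumed to be a regular cardinal. By assumption $\kappa$ is uncountable, so $\cf(\kappa)>\w$. Now given $\e\in\E_X$, the ballean composition axiom applied inductively produces, for each $n\in\w$, an entourage $\eta_n\in\E_X$ with $\e^n\subset\eta_n$, so that $\e^n\in{\downarrow}\E_X$. Using that $\{\e_\alpha\}_{\alpha<\kappa}$ is a base, we can choose ordinals $\alpha_n<\kappa$ such that $\e^n\subset\e_{\alpha_n}$ for every $n\in\w$.

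The crux of the argument is the standard cofinality estimate: since $\cf(\kappa)>\w$, the countable set $\{\alpha_n:n\in\w\}$ is bounded in $\kappa$, so $\beta=\sup_{n\in\w}\alpha_n<\kappa$. Then $\e^n\subset\e_{\alpha_n}\subset\e_\beta$ for every $n\in\w$, whence
$$\e^{<\w}=\bigcup_{n\in\w}\e^n\subset\e_\beta\in{\downarrow}\E_X,$$
and therefore $\e^{<\w}\in{\downarrow}\E_X$. Applying the characterization of cellularity recalled above, we conclude that $\mathbf X$ is cellular.

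There is no real obstacle here; the proposition is essentially an immediate consequence of the cellularity criterion combined with the fact that in a regular uncountable cofinality the supremum of countably many bounded ordinals remains bounded. The only point to be careful about is invoking the composition axiom to ensure the finite iterates $\e^n$ already belong to the downward-closed coarse structure ${\downarrow}\E_X$ before taking the supremum.
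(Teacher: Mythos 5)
Your proof is correct and follows exactly the route the paper intends: the paper offers no explicit argument beyond noting that the $\e^{<\w}$-characterization of cellularity "implies this simple fact," and your argument — bounding $\e^n\subset\e_{\alpha_n}$ in a well-ordered base of regular uncountable cofinality and taking $\beta=\sup_n\alpha_n<\kappa$ to get $\e^{<\w}\subset\e_\beta\in{\downarrow}\E_X$ — is precisely the intended cofinality argument.
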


\begin{remark} By \cite[Theorem 3.1.3]{PZ}, a ballean $X$ is cellular if and only if it has asymptotic dimension $\mathrm{asdim}(X)=0$. A metrizable ballean $X$ is cellular if and only if its coarse structure is generated by an ultrametric (i.e., a metric $d$ satisfying the strong triangle inequality $d(x,z)\le\max\{d(x,y),d(y,z)\}$ for all point $x,y,z\in X$). More information on cellular balleans can be found in \cite[Chapter 3]{PZ}. For information on space of asymptotic dimension zero, see \cite{BDHM}.
\end{remark}

\begin{example} Every infinite cardinal $\kappa$ carries a natural ballean structure $\E_\kappa=\{\e_\alpha\}_{\alpha<\kappa}$ consisting of the entourages
$$\e_\alpha=\{(x,y)\in\kappa\times\kappa:x\le y+\alpha,\;\;y\le x+\alpha\}$$parametrized by ordinals $\alpha<\kappa$. The obtained ordinal ballean $(\kappa,\E_\kappa)$ will be denoted by $\overset{\leftrightarrow}{\kappa}$. Cardinal balleans $\overset{\leftrightarrow}\kappa$ were introduced in \cite{b5}. By Theorem 3 of \cite{b5}, the ballean $\overset{\leftrightarrow}{\kappa}$ is cellular for any uncountable cardinal $\kappa$.
\end{example}

\begin{example} Given an ordinal $\gamma$ and a transfinite sequence $(\kappa_\alpha)_{\alpha<\gamma}$ of non-zero cardinals, consider the ballean $$\cbox_{\alpha\in\gamma}\kappa_\alpha=\big\{(x_\alpha)_{\alpha\in\gamma}\in
\prod_{\alpha\in\gamma}\kappa_\alpha:|\{\alpha\in\gamma:x_\alpha\ne 0\}|<\w\big\}$$endowed with the ballean structure $\{\e_\beta\}_{\beta<\gamma}$ consisting of the entourages $$\e_\beta=\big\{\big((x_\alpha)_{\alpha\in\gamma},(y_\alpha)_{\alpha\in\gamma}\big)\in\Big(\cbox_{\alpha<\gamma}\kappa_\alpha\Big)^2:\forall \alpha>\beta\;\;(x_\alpha=y_\alpha)\big\}\mbox{ \ for \ }\beta<\gamma.$$
The ballean $\cbox_{\alpha\in\gamma}\kappa_\alpha$ is called the {\em asymptotic product} of cardinals $\kappa_\alpha$, $\alpha\in\gamma$. It is a cellular ordinal ballean whose cofinality equals $\cf(\gamma)$, the cofinality of the ordinal $\gamma$.

If all cardinals $\kappa_\alpha$, $\alpha\in\gamma$, are equal to a fixed cardinal $\kappa$, then the asymptotic product $\cbox_{\alpha\in\gamma}\kappa_\alpha$ will be denoted by $\kappa^{<\gamma}$. For a limit ordinal $\gamma$ the ballean $2^{<\gamma}$ is called a {\em Cantor macro-cube}. The Cantor macro-cube $2^{<\w}$ was characterized in \cite{BZ}. This characterization will be extended to all Cantor macro-cubes in Theorem~\ref{char}.
\end{example}

Balleans are objects of the (coarse) category whose morphisms are coarse maps. A map $f\colon X\to Y$ between two balleans $(X,\E_X)$ and $(Y,\E_Y)$ is called {\em coarse} if for each $\e\in\E_X$ there is $\delta\in\E_Y$ such that $\{(f(x),f(y)):(x,y)\in\e\}\subset\delta$. A map $f\colon X\to Y$ is called a {\em coarse isomorphism} if $f$ is bijective and both maps $f$ and $f^{-1}$ are coarse. In this case the balleans $(X,\E_X)$ and $(Y,\E_Y)$ are called {\em coarsely isomorphic}.
It follows that each ballean $(X,\E_X)$ is coarsely isomorphic to the coarse space $(X,{\downarrow}\E_X)$.

Coarse isomorphisms play the role of isomorphisms in the coarse category (whose objects are balleans and morphisms are coarse maps). Probably a more important notion is that of a coarse equivalence of balleans. Two balleans $(X,\E_X)$ and $(Y,\E_Y)$ are {\em coarsely equivalent} if they contain coarsely isomorphic large subspaces $L_X\subset X$ and $L_Y\subset Y$. A subset $L$ of a ballean $(X,\E_X)$ is called {\em large} if $X=B(L,\e)$ for some entourage $\e\in \E_X$.

Coarse equivalences can be alternatively defined using multi-valued maps.
By a multi-valued map (briefly, a {\em multi-map}) $\Phi\colon X\multimap Y$ between two sets $X,Y$ we
understand any subset $\Phi\subset X\times Y$. For a subset $A\subset X$ by $\Phi(A)=\{y\in Y:\exists a\in A\mbox{ with }(a,y)\in\Phi\}$ we denote the image of $A$ under the
multi-map $\Phi$. Given a point $x\in X$ we write $\Phi(x)$
instead of $\Phi(\{x\})$.

The inverse $\Phi^{-1}\colon Y\multimap X$ of the multi-map $\Phi\colon X\multimap Y$ is the
multi-map $$\Phi^{-1}=\{(y,x)\in Y\times X: (x,y)\in\Phi\}\subset
Y\times X$$ assigning to each point $y\in Y$ the set $\Phi^{-1}(y)=\{x\in X:y\in\Phi(x)\}$. For two multi-maps $\Phi\colon X\multimap Y$ and $\Psi\colon Y\multimap Z$ we
define their composition $\Psi\circ\Phi\colon X\multimap Z$ as usual:
$$\Psi\circ\Phi=\{(x,z)\in X\times Z:\exists y\in Y\mbox{ such that $(x,y)\in \Phi$ and $(y,z)\in\Psi$}\}.$$

A multi-map $\Phi\colon X\multimap Y$ between two balleans $(X,\E_X)$ and $(Y,\E_Y)$ is called {\em coarse} if for every $\e\in\E_X$ there is an entourage $\delta\in\E_Y$ containing the set $\w_\Phi(\e)=\bigcup_{(x,y)\in\e}\Phi(x)\times\Phi(y)$ called the {\em $\e$-oscillation} of $\Phi$.
More precisely, for a function $\varphi\colon \E_X\to\E_Y$ a multi-map $\Phi\colon X\multimap Y$ is defined to be {\em $\varphi$-coarse} if $\w_\Phi(\e)\subset\varphi(\e)$ for every $\e\in\E_X$. So, a multi-map $\Phi\colon X\multimap Y$ is coarse if and only if $\Phi$ is $\varphi$-coarse for some $\varphi\colon \E_X\to\E_Y$.
 It follows that a (single-valued) map $f\colon X\to Y$ is coarse if and only if it is coarse as a multi-map.

A multi-map $\Phi\colon X\multimap Y$ between two balleans is called a {\em coarse embedding} if  $\Phi^{-1}(Y)=X$ and both maps $\Phi$ and $\Phi^{-1}$ are coarse. If, in addition,  $\Phi(X)=Y$, then the multi-map $\Phi\colon X\multimap Y$ is called a {\em coarse equivalence} between the balleans $X$ and $Y$.   By analogy with the proof of Proposition~2.1 \cite{BZ}, it can be shown that two balleans $X,Y$ are coarsely equivalent if and only if there is a coarse equivalence $\Phi\colon X\multimap Y$.

The study of balleans (or coarse spaces) up to their coarse equivalence is one of principal tasks of Coarse Geometry \cite{BS}, \cite{Nowak},  \cite{PZ}, \cite{Roe}.

\begin{example} Let $G$ be a group. An ideal $\mathcal I$ in the Boolean algebra of all subsets of $G$ is called a {\em group ideal} if $G=\bigcup \mathcal I$ and if for any $A,B\in \mathcal I$ we get $AB^{-1}\in \mathcal I$.

Let $\I$ be a group ideal $\I$ on a group $G$ and $X$ be a transitive $G$-space endowed with an action $G\times X\to X$ of the group $G$. The $G$-space $X$ carries the ballean structure $\E_{X,G,\I}=\{\e_A\}_{A\in\I}$ consisting of the entourages $\e_A=\{(x,y)\in X:x\in (A\cup\{1_G\}\cup A^{-1})\cdot y\}$ parametrized by sets $A\in\I$. Here by $1_G$ we denote the unit of the group $G$.

By Theorems 1 and 3 of \cite{b4}, every (cellular) ballean $(X,\E_X)$ is coarsely isomorphic to the ballean $(X,\E_{X,G,\I})$ for a suitable group $G$ of permutations of $X$ and a suitable group ideal $\I$ of $G$ (having a base consisting of subgroups of $G$).
\end{example}

\begin{example} Let $G$ be a group endowed with the ballean $\E_G$ consisting of entourages $\e_F=\{(x,y)\in G\times G:xy^{-1}\in F\}$ parametrized by finite subsets $F=F^{-1}\subset G$ containing the unit $1_G$ of the group. By \cite[9.8]{PB} the ballean $(G,\E_G)$ is cellular if and only if the group $G$ is locally finite (in the sense that each finite subset of $G$ is contained in a finite subgroup of $G$). By \cite{BZ}, any two infinite countable locally finite groups $G,H$ are coarsely equivalent. On the other hand, by \cite{P02}, two countable locally finite groups $G,H$ are coarsely isomorphic if and only if $\phi_G=\phi_H$. Here for a group $G$ by $\phi_G\colon \Pi\to\w\cup\{\w\}$ we denote its {\em factorizing function}. It is defined on the set $\Pi$ of prime numbers and assigns to each prime number $p\in\Pi$ the (finite or infinite) number $$\phi_G(p)=\sup\{k\in\w: G\mbox{ contains a subgroup of cardinality }p^k\}.$$
\end{example}

\section{A criterion for a coarse equivalence of two cellular ordinal balleans}\label{s:crit}

In this section we introduce two cardinal characteristics called covering numbers of a ballean, and using these cardinal characteristics give a criterion for a coarse equivalence of two cellular ordinal balleans.

Given a subset $A\subset X$ of a set $X$ and an entourage $\e\subset X\times X$
consider the cardinal
$$\cov_\e(A)=\min\{|C|:C\subset X,\;A\subset B(C,\e)\}$$equal to the smallest number of $\e$-balls covering the set $A$.

For every ballean $(X,\E_X)$ consider the following cardinals:
\begin{itemize}
\item $\cov^\sharp(X,\E_X)$, equal to the smallest cardinal $\kappa$ for which there is an entourage $\e\in\E_X$ such that for every $\delta\in\E_X$ we get $\sup_{x\in X}\cov_\e(B(x,\delta))<\kappa$;
\item $\cov^\flat(X,\E_X)$, equal to the largest cardinal $\kappa$ such that for any cardinal $\lambda<\kappa$ and entourage $\e\in\E_X$ there is $\delta\in\E_X$ such that $\min_{x\in X}\cov_\e(B(x,\delta))\ge\lambda$.
\end{itemize}
It follows that
$$\cov^\sharp(X)=\min_{\e\in\E_X}\sup_{\delta\in\E_X}\big(\sup_{x\in X}\cov_\e(B(x,\delta))\big)^+$$and
$$\cov^\flat(X)=\min_{\e\in\E_X}\sup_{\delta\in\E_X}\big(\min_{x\in X}\cov_\e(B(x,\delta))\big)^+,$$
where $\kappa^+$ denotes the smallest cardinal which is larger than $\kappa$.
Cardinals are identified with the smallest ordinals of  given cardinality.

The following proposition can be proved by analogy with the proof of Lemmas~3.1 and 3.2 in \cite{BR}.

\begin{proposition} If a ballean $X$ coarsely embeds into a ballean $Y$, then $\cov^\sharp(X)\le\cov^\sharp(Y)$. If balleans $X,Y$ are coarsely equivalent, then $\cov^\flat(X)=\cov^\flat(Y)$ and $\cov^\sharp(X)=\cov^\sharp(Y)$.
\end{proposition}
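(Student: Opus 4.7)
The plan is to transfer covers of balls back and forth along $\Phi$ using the coarseness of $\Phi$ and $\Phi^{-1}$. A single template argument will handle both the $\cov^\sharp$-inequality for a coarse embedding and, applied twice, the $\cov^\sharp$-equality under a coarse equivalence; a dual argument in contrapositive form will give the $\cov^\flat$-equality.

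For $\cov^\sharp(X)\le\cov^\sharp(Y)$, let $\Phi\colon X\multimap Y$ be the given coarse embedding and let $\e_Y\in\E_Y$ witness $\cov^\sharp(Y)=\kappa$. The recipe for building a witness $\e_X\in\E_X$ is: first enlarge $\e_Y$ to some $\tilde\e_Y\in\E_Y$ with $\e_Y\circ\e_Y\subset\tilde\e_Y$, then use coarseness of $\Phi^{-1}$ to pick $\e_X\in\E_X$ with $\w_{\Phi^{-1}}(\tilde\e_Y)\subset\e_X$. Given any $\delta_X\in\E_X$, take $\delta_Y\in\E_Y$ with $\w_\Phi(\delta_X)\subset\delta_Y$; for each $x\in X$ the condition $\Phi^{-1}(Y)=X$ provides $y\in\Phi(x)$, and coarseness of $\Phi$ forces $\Phi(B(x,\delta_X))\subset B(y,\delta_Y)$. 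By choice of $\e_Y$ this image is covered by fewer than $\kappa$ balls $\{B(y_i,\e_Y)\}_{i\in I}$; pulling back and picking one centre $x_i$ per non-empty $\Phi^{-1}(B(y_i,\e_Y))$ yields an $\e_X$-cover of $B(x,\delta_X)$ of size $\le|I|<\kappa$, uniform in $x$.

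For the $\cov^\flat$-equality I would read the same diagram in reverse. Given $\lambda<\cov^\flat(X)$ and $\e_Y\in\E_Y$, construct $\tilde\e_Y$ and $\e_X$ exactly as above, use the definition of $\cov^\flat(X)$ to choose $\delta_X\in\E_X$ with $\min_x\cov_{\e_X}(B(x,\delta_X))\ge\lambda$, and set $\delta_Y\supset\w_\Phi(\delta_X)$. If some $y\in Y$ admitted an $\e_Y$-cover of $B(y,\delta_Y)$ by fewer than $\lambda$ balls, then the surjectivity $\Phi(X)=Y$ (available because $\Phi$ is a coarse equivalence) would produce $x\in\Phi^{-1}(y)$, and the same pull-back recipe would give a forbidden $\e_X$-cover of $B(x,\delta_X)$ of size $<\lambda$. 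Applying the argument also to the inverse coarse equivalence $\Phi^{-1}\colon Y\multimap X$ produces both the $\cov^\flat$- and $\cov^\sharp$-equalities.

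The main obstacle I anticipate is the asymmetry introduced by pulling back through a multi-valued map: two points of $\Phi^{-1}(B(y_i,\e_Y))$ are witnessed by points $y',y''\in B(y_i,\e_Y)$ whose separation is only $\e_Y\circ\e_Y$, not $\e_Y$, so controlling the whole set $\Phi^{-1}(B(y_i,\e_Y))$ inside a single $\e_X$-ball requires the one-time enlargement from $\e_Y$ to $\tilde\e_Y\supset\e_Y\circ\e_Y$ to happen \emph{before} the witness $\e_X$ is chosen. Once this bookkeeping is in place, the rest is a routine chase through the definitions of coarse multi-maps and of the oscillation $\w_\Phi$.
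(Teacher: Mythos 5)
Your argument is correct and is essentially the intended one: the paper offers no written proof, only the remark that the statement "can be proved by analogy with Lemmas~3.1 and 3.2 in \cite{BR}", and those arguments transfer covers of balls back and forth along the coarse multi-map exactly as you do, with the crucial bookkeeping that the enlargement $\tilde\e_Y\supset\e_Y\circ\e_Y$ is fixed before the witness $\e_X\supset\w_{\Phi^{-1}}(\tilde\e_Y)$, so that $\e_X$ is independent of $\delta_X$ and $x$. Your use of $\Phi^{-1}(Y)=X$ for the $\cov^\sharp$-inequality, of $\Phi(X)=Y$ in the contradiction step for $\cov^\flat$, and the application of both arguments to $\Phi$ and $\Phi^{-1}$ to get the equalities, is exactly what is needed.
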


Observe that the inequality $\cov^\sharp(X)\le\w$ means that $X$ has bounded geometry while $\cov^\flat(X)\ge\w$ means that $X$ has no isolated balls (see \cite{Zar}). By \cite{BZ}, any two metrizable cellular balleans of bounded geometry and without isolated balls are coarsely equivalent. In \cite{BR} this result was extended to the following criterion: two metrizable cellular balleans $X,Y$ are coarsely equivalent if $\cov^\flat(X)=\cov^\sharp(X)=\cov^\sharp(Y)=\cov^\flat(Y)$. In this paper we further extend this criterion to cellular ordinal balleans and prove the following main result of this paper.

\begin{theorem}\label{main} Let $X,Y$ be any two cellular ordinal balleans with $\cof(X)=\cof(Y)$.
\begin{enumerate}
\item If $\cov^\sharp(X)\le\cov^\flat(Y)$, then $X$ is coarsely equivalent to a subspace of $Y$.
\item If $\cov^\flat(X)=\cov^\sharp(X)=\cov^\sharp(Y)=\cov^\flat(Y)$, then the balleans $X$ and $Y$ are coarsely equivalent.
\end{enumerate}
\end{theorem}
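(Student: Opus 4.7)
The plan is to work entirely inside the cellular, well-ordered structure. Fix cellular well-ordered bases $\{\e_\alpha\}_{\alpha<\kappa}$ of ${\downarrow}\E_X$ and $\{\delta_\alpha\}_{\alpha<\kappa}$ of ${\downarrow}\E_Y$ consisting of equivalence relations, where $\kappa=\cof(X)=\cof(Y)$ is a regular cardinal (existing by cellularity and the ordinality assumption, together with the proposition above). Each $\e_\alpha$-ball $B(x,\e_\alpha)$ is then an equivalence class, and the partitions $X/\e_\alpha$ refine one another as $\alpha$ decreases, so $X$ acquires the structure of a transfinite tree of height $\kappa$ whose nodes at level $\alpha$ are the $\e_\alpha$-classes. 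The key reformulation: for $\beta<\alpha$, the cardinal $\cov_{\e_\beta}(B(x,\e_\alpha))$ is exactly $|B(x,\e_\alpha)/\e_\beta|$, the number of children of the node $[x]_\alpha$ at level $\beta$.

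After re-indexing, the hypothesis $\cov^\sharp(X)\le\cov^\flat(Y)$ supplies a level $\e_0$ in $X$ such that every $\e_\alpha$-class splits into strictly fewer than $\lambda:=\cov^\sharp(X)$ many $\e_0$-classes, and a level $\delta_0$ in $Y$ such that for each cardinal $\mu<\lambda$ and each $\beta<\kappa$ we can find $\beta'<\kappa$ for which every $\delta_{\beta'}$-class splits into at least $\mu$ many $\delta_\beta$-classes. To prove (1), I would construct by transfinite recursion on $\alpha<\kappa$ a strictly increasing cofinal function $f:\kappa\to\kappa$ and a compatible family of injections $\iota_\alpha:X/\e_\alpha\hookrightarrow Y/\delta_{f(\alpha)}$ respecting the tree structure (so that each $\iota_\alpha([x]_\alpha)$ is the $\delta_{f(\alpha)}$-class containing $\iota_\beta([x]_\beta)$ for every $\beta<\alpha$). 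At a successor stage the cardinal inequality above makes room to embed each branching pattern of $X$ into a branching pattern of $Y$ at a sufficiently large level $f(\alpha+1)$; at a limit stage take $f(\alpha)=\sup_{\beta<\alpha}f(\beta)$, which lies below $\kappa$ by regularity. Choosing any system of representatives of the classes realising the $\iota_\alpha$ gives a map $f:X\to Y$; the coherence of the $\iota_\alpha$ forces $(x,x')\in\e_\alpha\Rightarrow (f(x),f(x'))\in\delta_{f(\alpha)}$, and their injectivity forces the converse on $f(X)$, so $f$ is a coarse embedding onto a (necessarily large) subspace of $Y$.

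For (2), the symmetric hypothesis $\cov^\flat(X)=\cov^\sharp(X)=\cov^\sharp(Y)=\cov^\flat(Y)$ lets the same argument run simultaneously in both directions, so I would perform a back-and-forth recursion: at odd ordinal stages extend the partial matching to cover a new $\e_\alpha$-class of $X$ (possible by the argument of (1)), and at even stages to cover a new $\delta_\alpha$-class of $Y$ (by the symmetric argument). Because the covering numbers coincide, the embeddings $\iota_\alpha$ may be upgraded to bijections $X/\e_\alpha\to Y/\delta_{f(\alpha)}$, and the resulting multi-map $\Phi\colon X\multimap Y$ satisfies $\Phi(X)=Y$ and $\Phi^{-1}(Y)=X$ with both $\Phi$ and $\Phi^{-1}$ coarse, hence is a coarse equivalence.

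The principal obstacle I expect is handling the limit stages of the recursion: ensuring that $f(\alpha)=\sup_{\beta<\alpha}f(\beta)$ still lies below $\kappa$ and that the limit of the previously constructed injections yields a well-defined, still coherent $\iota_\alpha$. This is exactly where the common cofinality $\cof(X)=\cof(Y)$ together with the regularity of $\kappa$ is used, and where one must carefully bookkeep which representatives have been chosen. A secondary technical point is the careful matching of branching numbers at successor stages: one needs to choose $f(\alpha+1)$ so that \emph{every} $\delta_{f(\alpha+1)}$-class hit by the previously built $\iota_\alpha$ has enough $\delta_{f(\alpha)}$-subclasses to accommodate the image of the corresponding $\e_{\alpha+1}$-class, which is where the ``for every $\delta$'' universal quantifier in the definition of $\cov^\flat(Y)$ becomes indispensable.
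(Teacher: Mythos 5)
There is a genuine gap, and it sits exactly where your recursion claims to be routine. You build the injections $\iota_\alpha\colon X/\e_\alpha\to Y/\delta_{f(\alpha)}$ bottom--up, by recursion on $\alpha$. But the coherence constraint runs \emph{upward}: when you reach the stage $\alpha+1$, the $\e_\alpha$-classes that make up a single $\e_{\alpha+1}$-class $C$ have already received images under $\iota_\alpha$, and nothing in the construction forces these images to lie inside one $\delta_\beta$-class for \emph{any} $\beta<\kappa$ --- the set $\bigcup\{\iota_\alpha(C')\colon C'\subset C\}$ may be unbounded in $Y$, in which case no choice of $f(\alpha+1)$ can make $\iota_{\alpha+1}$ compatible with $\iota_\alpha$. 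The quantifier in the definition of $\cov^\flat(Y)$ gives you many small subclasses inside a given class of $Y$; it does not let you gather previously dispersed classes of $Y$ back into one class. So the choices at level $\alpha$ must anticipate the partition structure of $X$ at all later levels, cofinally in $\kappa$, and the recursion as stated does not do this. The paper circumvents precisely this point by never matching quotients level-by-level: Lemma 5.1 assigns to each point a finite ``address'' (the recursion on $d(x,y)=d^-(x,y)+1$ terminates in finitely many steps) and thereby produces a single point-map of $X$ into an asymptotic product $\cbox_{\alpha<\gamma}\bar\kappa_\alpha$, where coherence across levels is automatic.

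The second problem is your ``upgrade to bijections'' in part (2). The branching number of the tree of $X$ varies from node to node, anywhere between $\min_{x}\cov_{\e_\alpha}(B(x,\e_{\alpha+1}))$ and $\sup_{x}\cov_{\e_\alpha}(B(x,\e_{\alpha+1}))$, and likewise in $Y$; equal values of $\cov^\flat$ and $\cov^\sharp$ do not by themselves let you match the quotients bijectively at each level. The paper first \emph{re-chooses} the bases of $X$ and $Y$ so that at every level the min and the sup coincide and are equal for both spaces (condition (d) in Section 6); this is done by interleaving two increasing $\w$-sequences of cellular entourages and taking their unions, which requires $\add(X)=\add(Y)>\w$. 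Consequently the case $\cof(X)=\cof(Y)=\w$ cannot be handled by this scheme at all, and the paper instead quotes the tower-technique result of Banakh--Repov\v s (Theorem 1.2 of [BR]) for ultrametric spaces. Your sketch treats $\w$ and uncountable cofinality uniformly, so it silently assumes the hard metrizable case. A minor further slip: in part (1) the image of $X$ need not be (and in general is not) large in $Y$ --- largeness would give coarse equivalence of $X$ with $Y$, which the hypothesis $\cov^\sharp(X)\le\cov^\flat(Y)$ does not warrant.
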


The proof of this theorem will be presented in Section~\ref{s:pfmain}. First we shall discuss some applications of this theorem.

\section{Classifying homogeneous cellular ordinal balleans}\label{s:homo}

In this section we shall apply Theorem~\ref{main} to show that for a cellular ordinal ballean $X$ the equality $\cov^\flat(X)=\cov^\sharp(X)$ is equivalent to the homogeneity of $X$, defined as follows.

A ballean $(X,\E_X)$ is called {\em homogeneous} if there is a function $\varphi\colon \E_X\to\E_X$ such that for every points $x,y\in X$ there is a coarse equivalence $\Phi\colon X\multimap X$ such that $y\in\Phi(x)$ and the multi-maps $\Phi$ and $\Phi^{-1}$ are $\varphi$-coarse. Let us recall that $\Phi\colon X\multimap X$ is $\varphi$-coarse if $\w_\Phi(\e):=\bigcup_{(x,y)\in\e}\Phi(x)\times\Phi(x)\subset\varphi(\e)$ for every $\e\in\E_X$.

The following proposition shows that homogeneity is preserved by coarse equivalences.

\begin{proposition}\label{hompres} A ballean $X$ is homogeneous if and only if it is coarsely equivalent to a homogeneous ballean $Y$.
\end{proposition}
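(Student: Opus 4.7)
The plan is to prove the two directions separately. The forward direction is immediate: if $X$ is homogeneous then $X$ is coarsely equivalent to itself, so one may take $Y=X$. All the work is in the reverse direction: assuming a coarse equivalence $\Psi\colon X\multimap Y$ with a homogeneous $Y$, one must produce, uniformly in the two given points, a coarse equivalence on $X$ moving one to the other, with oscillation controlled by a \emph{single} function $\varphi_X\colon\E_X\to\E_X$.

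The key idea is conjugation. Fix functions $\psi_1\colon\E_X\to\E_Y$ and $\psi_2\colon\E_Y\to\E_X$ such that $\Psi$ is $\psi_1$-coarse and $\Psi^{-1}$ is $\psi_2$-coarse, and let $\varphi_Y\colon\E_Y\to\E_Y$ witness the homogeneity of $Y$. Given $x_1,x_2\in X$, choose any $y_i\in\Psi(x_i)$ (nonempty because $\Psi(X)=Y$ and $\Psi^{-1}(Y)=X$); by homogeneity of $Y$ select a coarse equivalence $\Phi_Y\colon Y\multimap Y$ with $y_2\in\Phi_Y(y_1)$ such that $\Phi_Y$ and $\Phi_Y^{-1}$ are $\varphi_Y$-coarse. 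Then define
$$\Phi:=\Psi^{-1}\circ\Phi_Y\circ\Psi\colon X\multimap X.$$
Routine chasing of elements gives $x_2\in\Phi(x_1)$, $\Phi(X)=\Psi^{-1}(\Phi_Y(Y))=\Psi^{-1}(Y)=X$, and similarly $\Phi^{-1}(X)=X$.

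The substantive step is bounding the oscillations. For $\e\in\E_X$ and $(x,x')\in\e$, every pair $(y,y')\in\Psi(x)\times\Psi(x')$ lies in $\w_\Psi(\e)\subset\psi_1(\e)$; every pair $(z,z')\in\Phi_Y(y)\times\Phi_Y(y')$ then lies in $\w_{\Phi_Y}(\psi_1(\e))\subset\varphi_Y(\psi_1(\e))$; and finally every pair in $\Psi^{-1}(z)\times\Psi^{-1}(z')$ lies in $\psi_2(\varphi_Y(\psi_1(\e)))$. Thus $\w_\Phi(\e)\subset\psi_2(\varphi_Y(\psi_1(\e)))$, and by the analogous estimate for $\Phi_Y^{-1}$ the same bound works for $\w_{\Phi^{-1}}(\e)$. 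Setting $\varphi_X(\e):=\psi_2(\varphi_Y(\psi_1(\e)))$ gives a single function $\E_X\to\E_X$, independent of $x_1,x_2$, witnessing that $\Phi$ and $\Phi^{-1}$ are $\varphi_X$-coarse. Hence $X$ is homogeneous.

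The only subtlety, and the part most prone to a slip, is the uniformity of the function $\varphi_X$: one must be careful that $\psi_1,\psi_2,\varphi_Y$ were chosen \emph{before} the points $x_1,x_2$, so that $\varphi_X$ does not secretly depend on the chosen conjugating multi-map $\Phi_Y$. Once one observes that homogeneity was defined via a single global function $\varphi_Y$, and that $\psi_1,\psi_2$ come with the fixed coarse equivalence $\Psi$, this uniformity is automatic, and the proof is complete.
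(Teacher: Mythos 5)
Your proof is correct and follows essentially the same route as the paper: the forward direction via $Y=X$, and the reverse direction by conjugating the homogeneity witness of $Y$ through the fixed coarse equivalence, with the uniform function $\varphi_X=\psi_2\circ\varphi_Y\circ\psi_1$ chosen before the points. The paper's argument is the same construction (written as $\Psi_X=\Phi^{-1}\circ\Psi_Y\circ\Phi$ with $\varphi_X=\varphi_{Y,X}\circ\varphi_Y\circ\varphi_{X,Y}$), and your oscillation estimates fill in exactly the step the paper leaves as ``it can be shown.''
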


\begin{proof} The ``only if'' part is trivial. To prove the ``if'' part, assume that a ballean $(X,\E_X)$ admits a coarse equivalence $\Phi\colon X\multimap Y$ with a homogeneous ballean $(Y,\E_Y)$. By the homogeneity of $(Y,\E_Y)$, there is a function $\varphi_Y\colon \E_Y\to\E_Y$ such that for any points $y,y'\in Y$ there is a coarse equivalence $\Psi\colon Y\multimap Y$ such that $y'\in \Psi(y)$ and both multi-maps $\Psi$ and $\Psi^{-1}$ are $\varphi_Y$-coarse. Since $\Phi$ is a coarse equivalence, there are functions $\varphi_{X,Y}:\E_X\to\E_Y$ and $\varphi_{Y,X}:\E_Y\to\E_X$ such that $\w_\Phi(\e)\subset\varphi_{X,Y}(\e)$ and $\w_{\Phi^{-1}}(\delta)\subset\varphi_{Y,X}(\delta)$ for every $\e\in\E_X$ and $\delta\in\E_Y$. We claim that the function $$\varphi_X=\varphi_{Y,X}\circ\varphi_Y\circ\varphi_{X,Y}:\E_X\to\E_X$$
witnesses that the ballean $X$ is homogeneous. Indeed, given any points $x,x'$, we can choose points $y\in\Phi(x)$, $y'\in\Phi(x')$ and find a coarse equivalence $\Psi_Y\colon Y\to Y$ such that $y'\in\Psi_Y(y)$ and the multi-maps $\Psi_Y$ and $\Psi_Y^{-1}$ are $\varphi_Y$-coarse. It can be shown that the multi-map $\Psi_X=\Phi^{-1}\circ\Psi_Y\circ\Phi\colon X\multimap X$ has the desired properties: $x'\in \Phi^{-1}(y')\subset \Phi^{-1}(\Psi_Y(y))\subset\Phi^{-1}(\Psi_Y(\Phi(x)))=\Phi_X(x)$ and $\w_{\Phi_X}(\e)\cup\w_{\Phi_X^{-1}}(\e)\subset \varphi_X(\e)$ for all $\e\in\E_X$.
\end{proof}

\begin{proposition}\label{homocov} If a ballean $X$ is homogeneous, then $\cov^\flat(X)=\cov^\sharp(X)$.
\end{proposition}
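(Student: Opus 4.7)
The plan is to prove both inequalities $\cov^\flat(X)\le\cov^\sharp(X)$ and $\cov^\sharp(X)\le\cov^\flat(X)$. The first is universal, not requiring homogeneity: since $\min_x\cov_\e(B(x,\delta))\le\sup_x\cov_\e(B(x,\delta))$ for all $\e,\delta\in\E_X$, this pointwise inequality survives the enveloping $\sup_\delta(\cdot)^+$ and $\min_\e$ operations used to define the two cardinals. So the content is the reverse inequality $\cov^\sharp(X)\le\cov^\flat(X)$, and homogeneity will enter only here.

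Let $\varphi\colon\E_X\to\E_X$ witness homogeneity. The main step will be the following cover-transfer inequality, expressing that homogeneity gives uniform control on the sizes of $\e$-covers of balls at different centres: for all $\e,\delta\in\E_X$ and all $x_1,x_2\in X$,
$$\cov_{\varphi(\e)}(B(x_1,\delta))\le\cov_\e(B(x_2,\varphi(\delta))).$$
To prove this, I would use homogeneity to pick a coarse equivalence $\Phi\colon X\multimap X$ with $x_2\in\Phi(x_1)$ such that both $\Phi$ and $\Phi^{-1}$ are $\varphi$-coarse. First observe that $\Phi(B(x_1,\delta))\subset B(x_2,\varphi(\delta))$: if $(x_1,x')\in\delta$ and $y'\in\Phi(x')$, then $(x_2,y')\in\Phi(x_1)\times\Phi(x')\subset\w_\Phi(\delta)\subset\varphi(\delta)$. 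Given a minimum-size $\e$-cover $C\subset X$ of $B(x_2,\varphi(\delta))$, use that $\Phi$ is a coarse equivalence (so $\Phi^{-1}(c)\ne\emptyset$ for each $c\in C$) to pick $z_c\in\Phi^{-1}(c)$, and verify via the $\varphi$-coarseness of $\Phi^{-1}$ that $C':=\{z_c:c\in C\}$ is a $\varphi(\e)$-cover of $B(x_1,\delta)$: for $x'\in B(x_1,\delta)$, pick $y'\in\Phi(x')$ and $c\in C$ with $(y',c)\in\e$; then $(x',z_c)\in\Phi^{-1}(y')\times\Phi^{-1}(c)\subset\w_{\Phi^{-1}}(\e)\subset\varphi(\e)$.

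Given the cover-transfer inequality, taking $\sup_{x_1}$ on the left and $\min_{x_2}$ on the right yields
$$\sup_{x_1\in X}\cov_{\varphi(\e)}(B(x_1,\delta))\le\min_{x_2\in X}\cov_\e(B(x_2,\varphi(\delta)))$$
for all $\e,\delta\in\E_X$. Passing to $(\cdot)^+$, then taking $\sup_\delta$ (the right-hand side is a supremum over a subset of $\E_X$, since $\varphi(\E_X)\subset\E_X$, hence dominated by the supremum over all of $\E_X$), and finally $\min_\e$ (the left-hand side is a minimum over the subset $\varphi(\E_X)\subset\E_X$, hence bounds the minimum over all of $\E_X$ from above), I expect to chain these into the target inequality $\cov^\sharp(X)\le\cov^\flat(X)$. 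The only real obstacle I foresee is careful bookkeeping: checking that the non-empty preimage condition and the $\varphi$-coarseness of $\Phi^{-1}$ are applied to the correct pairs $(y',c)\in\e$, and that the quantifier shuffling at the end respects the directions of the $\min$ and $\sup$ over the subset $\varphi(\E_X)$.
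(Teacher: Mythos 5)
Your proof is correct and follows essentially the same route as the paper: the cover-transfer inequality $\cov_{\varphi(\e)}(B(x_1,\delta))\le\cov_\e(B(x_2,\varphi(\delta)))$ is exactly the core of the paper's argument, which pushes an $\e$-cover of $B(x,\varphi(\delta'))$ forward through $\Phi$ to get a $\varphi(\e)$-cover of $B(x',\delta')$, while you pull it back through $\Phi^{-1}$ -- the same mechanism. The only difference is presentational: the paper phrases the conclusion as a proof by contradiction from the verbal definitions of $\cov^\flat$ and $\cov^\sharp$, whereas you carry out the direct quantifier bookkeeping on the displayed formulas.
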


\begin{proof} Since $\cov^\flat(X)\le \cov^\sharp(X)$, it suffices to check that
$\cov^\flat(X)\ge\cov^\sharp(X)$. This inequality will follow as soon as given $\e\in\E_X$ and a cardinal $\kappa<\cov^\sharp(X)$, we find an entourage $\delta\in\E_X$ such that $\min_{x\in X}\cov_\e(B(x,\delta))\ge\kappa$. By the homogeneity of $X$, there is a function $\varphi\colon \E_X\to\E_X$ such that for any points $x,y\in X$ there is a coarse equivalence $\Phi\colon X\multimap X$ such that  $y\in\Phi(x)$ and the multi-maps $\Phi$ and $\Phi^{-1}$ are $\varphi$-coarse.

By the definition of the cardinal $\cov^\sharp(X)>\kappa$, for the entourage $\e'=\varphi(\e)$, there is an entourage $\delta'\in\E_X$ and a point $x'\in X$ such that $\cov_{\e'}(B(x',\delta'))\ge\kappa$. We claim that the entourage $\delta=\varphi(\delta')\in\E_X$ has the required property: $\min_{x\in X}\cov_{\e}(B(x,\delta))\ge\kappa$. Assume conversely that $\cov_\e(B(x,\delta))<\kappa$ for some $x\in X$. By the homogeneity of $X$ and the choice of $\varphi$, there is a coarse equivalence $\Phi\colon X\multimap X$ such that $x'\in\Phi(x)$ and the multi-maps $\Phi$ and $\Phi^{-1}$ are $\varphi$-coarse.
Since $\cov_\e(B(x,\delta))<\kappa$, there is a subset $C\subset X$ of cardinality $|C|<\kappa$ such that $B(x,\delta)\subset \bigcup_{c\in C}B(c,\e)$. For every $c\in C$ fix a point $y_c\in \Phi(c)$ and observe that for every point $b\in B(c,\e)$ we get $(b,c)\in\e$ and hence $\Phi(b)\times\Phi(c)\subset\w_\Phi(\e)\subset\varphi(\e)$ and $\Phi(b)\subset B(y_c,\varphi(\e))=B(y_c,\e')$, which implies $\Phi(B(c,\e))\subset B(y_c,\e')$. Taking into account that $B(x,\delta)\subset \bigcup_{c\in C}B(c,\e)$, we get $\Phi(B(x,\delta))\subset \bigcup_{c\in C}\Phi(B(c,\e))\subset\bigcup_{c\in C}B(y_c,\e')$, which implies $\cov_{\e'}(\Phi(B(x,\delta)))\le|C|<\kappa$.

We claim that $B(x',\delta')\subset\Phi(B(x,\delta))$. Indeed, for any point $y'\in B(x',\delta')$ we can fix a point $y\in\Phi^{-1}(x')$ and observe that $(y',x')\in\delta'$ implies $(y,x)\in\Phi^{-1}(y')\times\Phi^{-1}(x')\subset\w_{\Phi^{-1}}(\delta')\subset\varphi(\delta')=\delta$ (by the $\varphi$-coarse property of the multi map $\Phi^{-1}$). Then $y\in B(x,\delta)$ and $y'\in \Phi(y)\subset \Phi(B(x,\delta))$. Finally, we get $B(x',\delta')\subset \Phi(B(x,\delta))$ and $\cov_{\e'}(B(x',\delta'))\le\cov_{\e'}(\Phi(B(x,\delta)))\le|C|<\kappa$, which contradicts the choice of $\delta'$ and  $x'$. This completes the proof of the equality $\cov^\flat(X)=\cov^\sharp(X)$.
\end{proof}

\begin{theorem}\label{homo} A cellular ordinal ballean $X$ is homogeneous if and only if $\cov^\flat(X)=\cov^\sharp(X)$.
\end{theorem}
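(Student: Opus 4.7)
The forward implication is immediate from Proposition~\ref{homocov}. For the reverse implication, I would reduce to Theorem~\ref{main}(2) by producing an explicit homogeneous model with the correct coarse invariants.

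Suppose $X$ is a cellular ordinal ballean with $\cov^\flat(X)=\cov^\sharp(X)$, set $\kappa=\cof(X)$, and write $\cov^\sharp(X)=\mu^+$ (which is legitimate because $\cov^\sharp(X)=(\min_{\e\in\E_X}\sup_{\delta\in\E_X}\sup_{x\in X}\cov_\e(B(x,\delta)))^+$ is always a successor cardinal). The plan is to construct a homogeneous cellular ordinal ballean $Y$ with $\cof(Y)=\kappa$ and $\cov^\sharp(Y)=\cov^\flat(Y)=\mu^+$; Theorem~\ref{main}(2) then supplies a coarse equivalence between $X$ and $Y$, and Proposition~\ref{hompres} transports homogeneity from $Y$ back to $X$.

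The natural candidate for $Y$ is an asymptotic product $\cbox_{\alpha<\kappa}\kappa_\alpha$, which has cofinality $\cf(\kappa)=\kappa$ and is homogeneous with witness $\varphi=\mathrm{id}$. Indeed, for any two points $y,y'\in Y$, the set $F=\{\alpha:y_\alpha\ne y'_\alpha\}$ is finite, so choosing permutations $\sigma_\alpha\in\mathrm{Sym}(\kappa_\alpha)$ with $\sigma_\alpha(y_\alpha)=y'_\alpha$ for $\alpha\in F$ and $\sigma_\alpha=\mathrm{id}$ otherwise assembles coordinatewise into a bijection $f\colon Y\to Y$ that sends $y$ to $y'$ and preserves each entourage $\e_\beta$ exactly: $\e_\beta$ is defined by agreement in the coordinates $\alpha>\beta$, a relation stable under independent permutations of each coordinate. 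Thus both $f$ and $f^{-1}$ are $\mathrm{id}$-coarse, which gives homogeneity.

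The main obstacle is calibrating the sequence $(\kappa_\alpha)_{\alpha<\kappa}$ so that $\cov^\sharp(Y)=\mu^+$. In the asymptotic product, $\cov_{\e_{\beta_0}}(B(x,\e_{\beta_1}))$ equals the cardinality of the set of finitely supported functions on $(\beta_0,\beta_1]$ with coordinatewise values in $\kappa_\alpha$, and one picks the $\kappa_\alpha$ so that the supremum of these cardinals over $\beta_1<\kappa$ (after selecting a witnessing $\beta_0$) is exactly $\mu$: a constant choice $\kappa_\alpha=\mu$ suffices when $\mu$ is an infinite cardinal, and a finite choice $\kappa_\alpha\ge 2$ works when $\mu$ is finite. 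The equality $\cov^\flat(Y)=\cov^\sharp(Y)$ then follows automatically from the homogeneity of $Y$ via Proposition~\ref{homocov}, and Theorem~\ref{main}(2) combined with Proposition~\ref{hompres} closes the argument.
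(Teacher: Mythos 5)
Your overall strategy is the same as the paper's: forward direction from Proposition~\ref{homocov}, reverse direction by manufacturing a homogeneous cellular ordinal ballean $Y$ with the same cofinality and covering numbers and then invoking Theorem~\ref{main}(2) together with Proposition~\ref{hompres}; your verification that an asymptotic product is homogeneous via coordinatewise permutations is also fine. The gap is in your very first reduction: $\cov^\sharp(X)$ is \emph{not} always a successor cardinal, and your parenthetical justification misquotes the definition. In the paper the successor operation sits \emph{inside} the supremum over $\delta$, namely $\cov^\sharp(X)=\min_{\e\in\E_X}\sup_{\delta\in\E_X}\bigl(\sup_{x\in X}\cov_\e(B(x,\delta))\bigr)^+$, and a supremum of successor cardinals is frequently a limit cardinal. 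Concretely, for the Cantor macro-cube $2^{<\w}$ one has $\cov_{\e_m}(B(x,\e_n))=2^{n-m}$, so $\cov^\sharp(2^{<\w})=\aleph_0$ (this is exactly the ``bounded geometry'' situation mentioned right after the definition), and by Proposition~\ref{cantor} the value $\cov^\sharp(2^{<\gamma})=\lceil\gamma\rceil$ can be any limit cardinal, e.g.\ $\aleph_\w$. For such $X$ there is no $\mu$ with $\cov^\sharp(X)=\mu^+$, so your model $Y$ cannot even be specified, and no constant choice $\kappa_\alpha=\mu$ (nor any constant finite choice) can produce a limit value of $\cov^\sharp(Y)$.

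There is a second, related soft spot even in the successor case: you assert that the constant choice $\kappa_\alpha=\mu$ yields $\cov^\sharp(Y)=\mu^+$, but by your own formula the covering number $\cov_{\e_{\beta_0}}(B(x,\e_{\beta_1}))$ in $\cbox_{\alpha<\kappa}\mu$ is the number of finitely supported functions on $(\beta_0,\beta_1]$, which is of order $\max(\aleph_0,|\beta_1|,\mu)$; the interval lengths enter, which is precisely why $\cov^\sharp(2^{<\gamma})=\lceil\gamma\rceil$ rather than $\aleph_0$. So the calibration works only if $\mu$ dominates $\sup_{\beta<\cof(X)}|\beta|$, a fact you neither state nor prove. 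The paper sidesteps both problems at once: from $\kappa=\cov^\flat(X)=\cov^\sharp(X)$ it extracts a non-decreasing transfinite sequence of cardinals $(\kappa_\alpha)_{\alpha<\gamma}$, $\gamma=\cof(X)$, with $\kappa=\sup_{\alpha<\gamma}\kappa_\alpha^+$, and builds the homogeneous witness with these varying step sizes --- an increasing union of groups $G=\bigcup_{\alpha<\gamma}G_\alpha$ with $|G_{\alpha+1}/G_\alpha|=\kappa_\alpha$, homogeneous because left shifts are $\mathrm{id}$-coarse isomorphisms (an asymptotic product $\cbox_{\alpha<\gamma}\kappa_\alpha$ with these varying factors would serve the same purpose). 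If you replace your constant-factor product by such a calibrated sequence taken from the covering numbers of $X$ itself, your argument goes through.
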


\begin{proof} The ``only if'' part follows from Proposition~\ref{homocov}. To prove the ``if'' part, assume $X$ is a cellular ordinal ballean with $\cov^\flat(X)=\cov^\sharp(X)$. Let $\gamma=\cof(X)=\add(X)$. The definition of the cardinal  $\kappa=\cov^\flat(X)=\cov^\sharp(X)$ implies that there exists a non-decreasing transfinite sequence of cardinals $(\kappa_\alpha)_{\alpha<\gamma}$ such that $\kappa=\sup_{\alpha<\gamma}\kappa_\alpha^+$.
Choose an increasing transfinite sequence of groups $(G_\alpha)_{\alpha<\gamma}$ such that $G_\alpha=\bigcup_{\beta<\alpha}G_\beta$ for every limit ordinal $\alpha<\gamma$ and $|G_{\alpha+1}/G_\alpha|=\kappa_\alpha$ for every ordinal $\alpha<\gamma$.

Consider the group $G=\bigcup_{\alpha<\gamma}G_\alpha$ endowed with the ballean structure $\E_G=(\e_\alpha)_{\alpha<\gamma}$ consisting of the entourages
$$\e_\alpha=\{(x,y)\in G:x^{-1}y\in G_\alpha\}\mbox{ for $\alpha<\gamma$}.$$
It is clear that the left shifts are $\mathrm{id}$-coarse isomorphisms of $(G,\E_G)$, which implies that the ballean $(G,\E_G)$ is homogeneous. It is clear that $\add(G,\E_G)=\cof(G,\E_G)=\gamma$ and $$\cov^\flat(G,\E_G)=\cov^\sharp(G,\E_G)=\min_{\alpha<\gamma}\sup_{\alpha\le\beta<\gamma}|G_\beta/G_\alpha|^+=\sup_{\alpha<\gamma}\kappa_\alpha^+=\kappa.$$
Applying Theorem~\ref{main}, we conclude that $X$ is coarsely equivalent to the homogeneous ballean $(G,\E_G)$ and hence $X$ is homogeneous according to Proposition~\ref{hompres}.
\end{proof}

The following corollary of Theorems~\ref{main} and \ref{homo} shows that the cardinals $\cof(X)$ and $\cov^\sharp(X)$ fully determine the coarse structure of a homogeneous cellular ordinal ballean $X$.

\begin{theorem} For any two homogeneous cellular ordinal balleans $X,Y$ the following conditions are equivalent:
\begin{enumerate}
\item $X$ and $Y$ are coarsely equivalent;
\item $X$ is coarsely equivalent to a subspace of $Y$ and $Y$ is coarsely equivalent to a subspace of $X$;
\item $\cof(X)=\cof(Y)$ and $\cov^\sharp(X)=\cov^\sharp(Y)$.
\end{enumerate}
\end{theorem}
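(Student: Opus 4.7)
The plan is to prove the cycle of implications $(1) \Rightarrow (2) \Rightarrow (3) \Rightarrow (1)$. The implication $(1) \Rightarrow (2)$ is immediate, since any coarse equivalence between $X$ and $Y$ witnesses in particular that $X$ is coarsely equivalent to the subballean $Y \subset Y$, and symmetrically for $Y$ and $X$.

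For $(2) \Rightarrow (3)$, I would fix a subballean $Y' \subset Y$ coarsely equivalent to $X$ and argue that passage to $Y'$ is harmless for both invariants. Viewed as a multi-map, the inclusion $Y' \hookrightarrow Y$ is a coarse embedding: its $\e$-oscillation is just $\e$ itself, and the inverse pulls entourages of $Y$ back to their restrictions to $(Y')^2$. Hence the invariance statement established just before Theorem~\ref{main} yields $\cov^\sharp(X) = \cov^\sharp(Y') \le \cov^\sharp(Y)$. Likewise, the family $\{\e \cap (Y')^2 : \e \in \E_Y\}$ is a base of $\E_{Y'}$, so $\cof(Y') \le \cof(Y)$; since cofinality is a coarse invariant, $\cof(X) = \cof(Y') \le \cof(Y)$. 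The reverse inequalities follow by symmetry, which gives (3).

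For $(3) \Rightarrow (1)$, I would invoke Proposition~\ref{homocov} for each of the homogeneous balleans $X$ and $Y$, obtaining $\cov^\flat(X) = \cov^\sharp(X)$ and $\cov^\flat(Y) = \cov^\sharp(Y)$. Together with the hypothesis $\cov^\sharp(X) = \cov^\sharp(Y)$, this produces the chain
$$
\cov^\flat(X) = \cov^\sharp(X) = \cov^\sharp(Y) = \cov^\flat(Y),
$$
and combined with $\cof(X) = \cof(Y)$ we are exactly in the setting of Theorem~\ref{main}(2), which yields a coarse equivalence between $X$ and $Y$.

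The only nontrivial step is $(2) \Rightarrow (3)$, where one must unwind the definition of subballean and of coarse embedding to verify both that the inclusion of a subballean is a coarse embedding and that $\cof$ and $\cov^\sharp$ are monotone under this inclusion. Once these routine checks are in place the theorem reduces cleanly to Theorem~\ref{main} via the homogeneity criterion recorded in Proposition~\ref{homocov}.
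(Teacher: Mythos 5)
Your proposal is correct and follows essentially the paper's own argument: $(1)\Ra(2)$ is trivial, $(2)\Ra(3)$ uses the coarse invariance of $\cof$ and $\cov^\sharp$ together with their monotonicity under passing to subballeans (via the proposition preceding Theorem~\ref{main}), and $(3)\Ra(1)$ combines Proposition~\ref{homocov} (the ``only if'' half of Theorem~\ref{homo}) with Theorem~\ref{main}(2). The only difference is that you spell out the routine verification that the inclusion of a subballean is a coarse embedding, which the paper leaves implicit.
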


\begin{proof} The implication $(1)\Ra(2)$ is trivial, the implication $(2)\Ra(3)$ follows by the invariance of the cardinal characteristics $\cof$ and $\cov^\sharp$ under coarse equivalence and their monotonicity under taking subspaces, and the final implication $(3)\Ra(1)$ follows from Theorems~\ref{main} and \ref{homo}.
\end{proof}

\section{Recognizing the coarse structure of Cantor macro-cubes and cardinal balleans}\label{s:cantor}

It is easy to see that for any ordinal $\gamma$ and transfinite sequence $(\kappa_\alpha)_{\alpha\in \gamma}$ of non-zero cardinals the asymptotic product $\cbox_{\alpha\in\gamma}\kappa_\alpha$ is a homogeneous cellular ordinal ballean whose cofinality equals $\cf(\gamma)$, the cofinality of the ordinal $\gamma$. In particular, the Cantor macro-cube $2^{<\gamma}$ is a homogeneous cellular ordinal ballean of cofinality $\cof(2^{<\gamma})=\cf(\gamma)$.

To evaluate the covering numbers of $2^{<\gamma}$, for an ordinal $\gamma$, consider the ordinal
$$ \lfloor\gamma\rfloor=\min\{\alpha:\gamma=\beta+\alpha\mbox{ for some $\beta<\gamma$}\}$$called the {\em tail} of $\gamma$, and the cardinal
$$\lceil\gamma\rceil=\min\{\alpha:\gamma\le\beta+|\alpha|\mbox{ for some $\beta<\gamma$}\}
$$called the {\em cardinal tail} of $\gamma$. It is clear that $\lfloor\gamma\rfloor\le\lceil\gamma\rceil$. Moreover,
$$\lceil\gamma\rceil=
\begin{cases}
|\lfloor\gamma\rfloor|&\mbox{if $\lfloor\gamma\rfloor$ is a cardinal},\\
|\lfloor\gamma\rfloor|^+&\mbox{otherwise}.
\end{cases}
$$

The equality $\gamma=\lfloor\gamma\rfloor$ holds if and only if the ordinal $\gamma$ is {\em additively indecomposable}, which means that $\alpha+\beta<\gamma$ for any ordinals $\alpha,\beta<\gamma$.

The following proposition can be derived from the definition of $2^{<\gamma}$.

\begin{proposition}\label{cantor} For every ordinal $\gamma$ the Cantor macro-cube $2^{<\gamma}$ is a cellular ordinal ballean with $$\add(2^{<\gamma})=\cof(2^{<\gamma})=\cf(\gamma)\mbox{ \ and \  }\cov^\flat(2^{<\gamma})=\cov^\sharp(2^{<\gamma})=\lceil\gamma\rceil.$$
\end{proposition}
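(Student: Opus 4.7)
The proof naturally splits into three pieces: the structural verifications (cellularity, ordinality, and the values of $\add$ and $\cof$), a homogeneity reduction that collapses $\cov^\flat$ and $\cov^\sharp$, and the ordinal‑arithmetic computation of $\cov^\sharp$; only the last piece is non‑formal.

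For the structural claims I would read directly from the definition that each $\e_\beta$ is an equivalence relation (``agreement on coordinates above $\beta$'' is trivially transitive), which gives cellularity, and that the defining family $\{\e_\beta\}_{\beta<\gamma}$ is a strictly increasing well‑ordered base whose cofinal subfamilies of smallest possible order type have length $\cf(\gamma)$. This yields $\cof(2^{<\gamma})=\cf(\gamma)$, and then $\add=\cof$ follows from the first proposition of Section~\ref{s:ord}. For the homogeneity reduction I would use the abelian group structure on $2^{<\gamma}$ under coordinatewise addition modulo $2$: each translation $x\mapsto x+g$ preserves every $\e_\beta$ on the nose, so $B(x,\e_\beta)$ is the translate of $B(\mathbf{0},\e_\beta)$ and the function $x\mapsto\cov_{\e_\alpha}(B(x,\e_\beta))$ is constant. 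This collapses the $\sup_x$ and $\min_x$ in the definitions of $\cov^\sharp$ and $\cov^\flat$, giving $\cov^\flat(2^{<\gamma})=\cov^\sharp(2^{<\gamma})$.

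For the main computation I would fix $x=\mathbf{0}$ and identify $B(\mathbf{0},\e_\beta)$ with the set of finite-support functions from $\gamma$ to $\{0,1\}$ supported in $[0,\beta]$; two such functions lie in the same $\e_\alpha$-class iff they coincide on $(\alpha,\beta]$. Hence $N(\alpha,\beta):=\cov_{\e_\alpha}(B(\mathbf{0},\e_\beta))$ equals the number of finite subsets of $(\alpha,\beta]$, which is $2^n$ if $|(\alpha,\beta]|=n<\w$ and $|\beta-\alpha|$ when $\beta-\alpha$ is infinite (with $\beta-\alpha$ understood as ordinal subtraction). For the upper bound $\cov^\sharp\le\lceil\gamma\rceil$ I would invoke the definition of $\lceil\gamma\rceil$ to pick $\alpha_0<\gamma$ with $\gamma\le\alpha_0+\lceil\gamma\rceil$; every $\beta<\gamma$ then satisfies $\beta-\alpha_0<\lceil\gamma\rceil$, and since $\lceil\gamma\rceil$ is an initial ordinal this forces $N(\alpha_0,\beta)<\lceil\gamma\rceil$ and hence $N(\alpha_0,\beta)^+\le\lceil\gamma\rceil$. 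For the matching lower bound, I would fix arbitrary $\alpha<\gamma$ and, for each cardinal $\lambda<\lceil\gamma\rceil$, invoke the minimality of $\lceil\gamma\rceil$ to conclude $\alpha+\lambda<\gamma$; with $\beta:=\alpha+\lambda$ this yields $N(\alpha,\beta)\ge\lambda$ and hence $\sup_\beta N(\alpha,\beta)^+\ge\lambda^+$, so letting $\lambda$ range below $\lceil\gamma\rceil$ produces the bound $\sup_\beta N(\alpha,\beta)^+\ge\lceil\gamma\rceil$.

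I do not expect any serious obstacle; the one point requiring care is the ordinal arithmetic at the end, specifically the implication ``$\beta<\alpha_0+\lceil\gamma\rceil \Rightarrow |\beta-\alpha_0|<\lceil\gamma\rceil$'' and the observation that the successors $\lambda^+$ with $\lambda<\lceil\gamma\rceil$ are cofinal in $\lceil\gamma\rceil$, both of which rest on $\lceil\gamma\rceil$ being an initial ordinal.
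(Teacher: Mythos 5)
Your argument is correct, and it is precisely the direct derivation from the definition that the paper intends: the paper states this proposition without proof, remarking only that it ``can be derived from the definition of $2^{<\gamma}$'', and your route (translation-invariance of the entourages collapsing $\min_x$ and $\sup_x$, the identification of $\cov_{\e_\alpha}(B(\mathbf{0},\e_\beta))$ with the number of finite subsets of $(\alpha,\beta]$, and the two bounds via the minimality defining $\lceil\gamma\rceil$) fills in exactly that derivation. The only caveat is that your computation, like the stated equality $\cov^\flat=\cov^\sharp=\lceil\gamma\rceil$ itself, really pertains to limit $\gamma$ (for successor $\gamma$ the ballean $2^{<\gamma}$ is bounded, so $\cov^\sharp=2$ while $\lceil\gamma\rceil=1$); this is an imprecision in the paper's formulation rather than a gap in your proof.
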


The following theorem (which can be derived from Proposition~\ref{cantor} and Theorem~\ref{main}) shows that in the class of cellular ordinal balleans, the Cantor macro-cubes $2^{<\gamma}$ play a role analogous to the role of the Cantor cubes $2^\kappa$ in the class of zero-dimensional compact Hausdorff spaces.

\begin{theorem}\label{cant} Let $\gamma$ be any ordinal and $X$ be any cellular ordinal ballean such that $\cof(X)=\cf(\gamma)$.
\begin{enumerate}
\item If $\lceil\gamma\rceil\le \cov^\flat(X)$, then $2^{<\gamma}$ is coarsely equivalent to a subspace of $X$;
\item If $\cov^\sharp(X)\le\lceil\gamma\rceil$, then $X$ is coarsely equivalent to a subspace of $2^{<\gamma}$.
\item If $\cov^\flat(X)=\cov^\sharp(X)=\lceil\gamma\rceil$, then $X$ is coarsely equivalent to $2^{<\gamma}$.
\end{enumerate}
\end{theorem}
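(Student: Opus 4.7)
The plan is to derive all three parts by direct application of Theorem~\ref{main} to the pair consisting of $X$ and the Cantor macro-cube $2^{<\gamma}$, using the computation of the cofinality and covering numbers of $2^{<\gamma}$ furnished by Proposition~\ref{cantor}. The cofinality hypothesis of Theorem~\ref{main} is automatic: Proposition~\ref{cantor} gives $\cof(2^{<\gamma})=\cf(\gamma)$, and by assumption $\cof(X)=\cf(\gamma)$, so $\cof(X)=\cof(2^{<\gamma})$ throughout.

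For part (1), I would instantiate Theorem~\ref{main}(1) with $2^{<\gamma}$ in the role of $X$ and the given ballean $X$ in the role of $Y$. Proposition~\ref{cantor} yields $\cov^\sharp(2^{<\gamma})=\lceil\gamma\rceil$, so the hypothesis $\lceil\gamma\rceil\le\cov^\flat(X)$ becomes exactly $\cov^\sharp(2^{<\gamma})\le\cov^\flat(X)$, and Theorem~\ref{main}(1) produces a coarse equivalence of $2^{<\gamma}$ with a subspace of $X$. For part (2) I would swap the roles, using $X$ in the $X$-slot and $2^{<\gamma}$ in the $Y$-slot of Theorem~\ref{main}(1); now the required inequality $\cov^\sharp(X)\le\cov^\flat(2^{<\gamma})$ reduces via Proposition~\ref{cantor} to $\cov^\sharp(X)\le\lceil\gamma\rceil$, which is the hypothesis. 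For part (3) I would apply Theorem~\ref{main}(2) to the pair $(X,2^{<\gamma})$: the hypothesis $\cov^\flat(X)=\cov^\sharp(X)=\lceil\gamma\rceil$ combined with $\cov^\flat(2^{<\gamma})=\cov^\sharp(2^{<\gamma})=\lceil\gamma\rceil$ from Proposition~\ref{cantor} gives the common equality of all four covering numbers, and the coarse equivalence follows.

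No real obstacle arises beyond bookkeeping, since the entire weight of the argument rests on Theorem~\ref{main} and Proposition~\ref{cantor}. The only point requiring slight attention is that part (3) is not merely a formal consequence of parts (1) and (2): having coarse equivalences to subspaces in both directions does not by itself yield a coarse equivalence of the whole spaces, so the stronger conclusion (3) genuinely uses Theorem~\ref{main}(2) rather than being deducible from (1) and (2) alone. Once that is observed, each part reduces to verifying a single inequality (or equality) of cardinals and invoking the appropriate clause of Theorem~\ref{main}.
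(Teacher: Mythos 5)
Your proposal is correct and is exactly the argument the paper intends: the paper states that Theorem~\ref{cant} ``can be derived from Proposition~\ref{cantor} and Theorem~\ref{main}'', and your instantiations of Theorem~\ref{main}(1) (in both orders) and Theorem~\ref{main}(2), with the cofinality and covering numbers of $2^{<\gamma}$ supplied by Proposition~\ref{cantor}, are precisely that derivation. Your remark that part (3) genuinely needs Theorem~\ref{main}(2) rather than following formally from (1) and (2) is a correct and worthwhile observation.
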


Proposition~\ref{cantor} and Theorem~\ref{cant} imply the following characterization of the Cantor macro-cube $2^{<\gamma}$ which extends the characterization of the Cantor macro-cube $2^{<\w}$ proved in \cite{BZ}.

\begin{theorem}\label{char} For any ordinal $\gamma$ and any ballean $X$ the following conditions are equivalent:
\begin{enumerate}
\item $X$ is coarsely equivalent to $2^{<\gamma}$;
\item $X$ is cellular, $\add(X)=\cof(X)=\cf(\gamma)$ and $\cov^\flat(X)=\cov^\sharp(X)=\lceil\gamma\rceil$.
\end{enumerate}
\end{theorem}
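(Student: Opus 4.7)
The plan is to derive the theorem directly from Proposition~\ref{cantor}, Theorem~\ref{cant}, and the coarse invariance of the relevant cardinal characteristics; no new construction is required, and the proof will be short.

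For the implication $(2)\Ra(1)$, the assumptions in (2) are exactly the hypotheses of Theorem~\ref{cant}(3): namely, $X$ is cellular, the equalities $\add(X)=\cof(X)=\cf(\gamma)$ say (via the earlier proposition characterizing ordinal balleans by $\add=\cof$) that $X$ is an ordinal ballean with $\cof(X)=\cf(\gamma)$, and the covering numbers satisfy $\cov^\flat(X)=\cov^\sharp(X)=\lceil\gamma\rceil$. So Theorem~\ref{cant}(3) applies verbatim and yields a coarse equivalence $X\sim 2^{<\gamma}$.

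For $(1)\Ra(2)$, the strategy is to start from Proposition~\ref{cantor}, which already lists every property asserted in (2) for the target ballean $2^{<\gamma}$, and then transport each such property to an arbitrary $X$ coarsely equivalent to $2^{<\gamma}$ using coarse invariance. The invariance of $\cov^\flat$ and $\cov^\sharp$ is recorded in the proposition immediately preceding Theorem~\ref{main}. Cellularity is coarse invariant via the Remark after the proposition on cellularity of ordinal balleans of uncountable cofinality, which identifies cellular balleans with the balleans of asymptotic dimension zero (a well-known coarse invariant); alternatively one checks directly that the characterization $\e^{<\w}\in{\downarrow}\E_X$ is preserved under coarse equivalence. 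Finally, the invariance of $\cof$ and $\add$ under coarse equivalence transfers the equality $\add(2^{<\gamma})=\cof(2^{<\gamma})=\cf(\gamma)$ to $X$.

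The argument has no real obstacle; it is a bookkeeping of invariants already established in the paper. The only point that deserves a brief explicit check, rather than a citation, is the coarse invariance of $\add$ and of the property of being ordinal, because the paper isolates invariance only for the two covering numbers $\cov^\flat$ and $\cov^\sharp$; this verification, however, is immediate from the definitions of $\cof$ and $\add$ and the standard functoriality of coarse maps and coarse equivalences.
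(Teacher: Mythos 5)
Your proposal is correct and matches the paper's own (implicit) argument: the paper simply asserts that Theorem~\ref{char} follows from Proposition~\ref{cantor} and Theorem~\ref{cant}, which is exactly the bookkeeping you carry out. Your explicit flagging of the coarse invariance of cellularity, $\add$ and $\cof$ --- which the paper records only for $\cov^\flat$ and $\cov^\sharp$ --- is a reasonable point of care, and the verifications are indeed routine (e.g., $\w_{\Phi^{-1}}(\delta)\cup\Delta_X$ transports a well-ordered base of ${\downarrow}\E_Y$ to one of ${\downarrow}\E_X$, and $\w_\Phi(\e^n)\subset\varphi(\e)^n$ gives the preservation of cellularity).
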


\begin{corollary}
For any two ordinals $\beta,\gamma$ the Cantor macro-cubes $2^{<\beta}$ and $2^{<\gamma}$ are coarsely equivalent of and only if $\cf(\beta)=\cf(\gamma)$ and $\lceil\beta\rceil=\lceil\gamma\rceil$.
\end{corollary}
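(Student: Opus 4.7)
The plan is to obtain this corollary as an immediate consequence of Theorem~\ref{char}, combined with the explicit values of $\cof$, $\add$, $\cov^\flat$, and $\cov^\sharp$ for Cantor macro-cubes computed in Proposition~\ref{cantor}. There is nothing deep to prove: once the characterization theorem is in hand, both ordinals appear only through the invariants $\cf(\cdot)$ and $\lceil\cdot\rceil$, and matching those invariants is precisely the condition we want.

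For the ``if'' direction I would assume $\cf(\beta)=\cf(\gamma)$ and $\lceil\beta\rceil=\lceil\gamma\rceil$ and apply Theorem~\ref{char} to $X=2^{<\beta}$ with the target ordinal $\gamma$. Using Proposition~\ref{cantor}, $X=2^{<\beta}$ is a cellular ballean with $\add(X)=\cof(X)=\cf(\beta)=\cf(\gamma)$ and $\cov^\flat(X)=\cov^\sharp(X)=\lceil\beta\rceil=\lceil\gamma\rceil$; so condition (2) of Theorem~\ref{char} is satisfied and therefore $2^{<\beta}$ is coarsely equivalent to $2^{<\gamma}$.

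For the ``only if'' direction I would assume $2^{<\beta}$ and $2^{<\gamma}$ are coarsely equivalent. Applying Theorem~\ref{char} with $X=2^{<\gamma}$ and target ordinal $\beta$ (or, equivalently, directly using the invariance of $\cof$ and $\cov^\sharp$ under coarse equivalence), condition (2) yields $\add(2^{<\gamma})=\cof(2^{<\gamma})=\cf(\beta)$ and $\cov^\flat(2^{<\gamma})=\cov^\sharp(2^{<\gamma})=\lceil\beta\rceil$. Combining with the computations of Proposition~\ref{cantor}, which give $\cof(2^{<\gamma})=\cf(\gamma)$ and $\cov^\sharp(2^{<\gamma})=\lceil\gamma\rceil$, we conclude $\cf(\beta)=\cf(\gamma)$ and $\lceil\beta\rceil=\lceil\gamma\rceil$.

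The proof is essentially bookkeeping, so the only potential obstacle is making sure the invariants line up correctly. In particular, one should verify cleanly that $\cof$ is invariant under coarse equivalence (it is, since $\cof$ equals $\add$ for ordinal balleans by the earlier proposition, and one can compare bases via a coarse equivalence); the invariance of $\cov^\flat$ and $\cov^\sharp$ is recorded explicitly in the paper. With these invariances in hand, the equivalence asserted by the corollary is immediate from Theorem~\ref{char} and Proposition~\ref{cantor}.
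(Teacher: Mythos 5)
Your argument is correct and is exactly the route the paper intends: the corollary is stated as an immediate consequence of Theorem~\ref{char} together with the values $\cof(2^{<\gamma})=\cf(\gamma)$ and $\cov^\flat(2^{<\gamma})=\cov^\sharp(2^{<\gamma})=\lceil\gamma\rceil$ from Proposition~\ref{cantor}, applied in both directions of the stated equivalence. Your side remark about verifying invariance of $\cof$ is unnecessary for the main line of your proof, since Theorem~\ref{char} already packages that information; otherwise the bookkeeping matches the paper's.
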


Finally, we recognize the coarse structure on the ballean $\overset{\leftrightarrow}\gamma$ supported by an additively indecomposable ordinal $\gamma$.
Given any non-zero ordinal $\gamma$ we consider the family $\{\e_\alpha\}_{\alpha<\gamma}$ of the entourages
$$\e_\alpha=\{(x,y)\in\gamma\times\gamma:x\le y+\alpha\mbox{ and }\;y\le x+\alpha\}$$
for $\alpha<\gamma$. It is easy to see that $(\gamma,\{\e_\alpha\})_{\alpha<\gamma}$ is a ballean if and only if the ordinal $\gamma$ is additively indecomposable (which means that $\alpha+\beta<\gamma$ for any ordinals $\alpha,\beta<\gamma$). This ballean will be denoted by $\bgamma$.

The following theorem classifies the balleans $\bgamma$ up to coarse equivalence.

\begin{theorem} For any additively indecomposable ordinal $\gamma$ the ballean $\bgamma$ is coarsely equivalent to:
\begin{itemize}
\item $\overset{\leftrightarrow}\w$ if and only if $\gamma=\beta\cdot \w$ for some $\beta$;
\item $2^{<\gamma}$, otherwise.
\end{itemize}
\end{theorem}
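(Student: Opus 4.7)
The plan is to split according to whether $\gamma$ has the form $\beta\cdot\w$. Since $\gamma$ is additively indecomposable, $\gamma=\w^\delta$ for some ordinal $\delta$, and a short Cantor-normal-form computation shows that $\w^\delta=\beta\cdot\w$ for some $\beta$ if and only if $\delta$ is a successor ordinal: indeed, every $\beta$ with leading term $\w^k a$ satisfies $\beta\cdot\w=\w^{k+1}$. So the two alternatives of the theorem correspond to (i) $\delta$ a successor and (ii) $\delta$ a limit.

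In case (i), with $\gamma=\beta\cdot\w$, I would construct an explicit coarse equivalence by exhibiting a large subballean of $\bgamma$ coarsely isomorphic to $\overset\leftrightarrow\w$. The candidate subset is $L=\{\beta\cdot n:n<\w\}$. Every $x<\gamma$ decomposes as $x=\beta n+r$ with $r<\beta$, so $(x,\beta n)\in\e_\beta$ and hence $L$ is $\e_\beta$-large in $\bgamma$. A direct calculation, using $\beta n+r+\beta k=\beta(n+k)$ for $r<\beta$ and $k\ge 1$, shows that the restriction of $\e_{\beta k+r}$ to $L^2$ equals $\{(\beta m,\beta n):|m-n|\le k\}$ independently of $r<\beta$. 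Thus the bijection $\beta n\mapsto n$ transports the induced subballean on $L$ onto $\overset\leftrightarrow\w$, whence $\bgamma$ is coarsely equivalent to $\overset\leftrightarrow\w$.

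In case (ii), with $\delta$ a limit ordinal, I would apply Theorem~\ref{char} to conclude $\bgamma$ is coarsely equivalent to $2^{<\gamma}$. This amounts to verifying (a) cellularity of $\bgamma$, (b) $\add(\bgamma)=\cof(\bgamma)=\cf(\gamma)$, and (c) $\cov^\flat(\bgamma)=\cov^\sharp(\bgamma)=\lceil\gamma\rceil$. For (a), the inclusion $\e_\alpha^n\subset\e_{\alpha\cdot n}$ gives $\e_\alpha^{<\w}\subset\e_{\alpha\cdot\w}$, and whenever $\alpha<\w^\delta$ with $\delta$ a limit the leading exponent $k$ of $\alpha$ satisfies $k+1<\delta$, so $\alpha\cdot\w\le\w^{k+1}<\gamma$, placing $\e_\alpha^{<\w}$ in ${\downarrow}\E_{\bgamma}$. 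For (b), $\bgamma$ is ordinal with well-ordered base $\{\e_\alpha\}_{\alpha<\gamma}$ of cofinality $\cf(\gamma)$, which yields the equalities. The crux is (c): the upper bound $\cov^\sharp(\bgamma)\le\lceil\gamma\rceil$ follows from $|B(x,\e_\alpha)|\le 2|\alpha|+1$ by taking $\e=\e_0$, whereas the matching lower bound $\cov^\flat(\bgamma)\ge\lceil\gamma\rceil$ requires, for each $\alpha<\gamma$ and each cardinal $\kappa<\lceil\gamma\rceil$, producing $\beta<\gamma$ so that every $\e_\beta$-ball splits into at least $\kappa$ pairwise $\e_\alpha$-separated pieces. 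This last lower bound, resting on delicate ordinal arithmetic about how intervals of $\gamma$ decompose into subintervals of smaller length, is the main technical obstacle.

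Finally, the ``if and only if'' in the first bullet is forced: in case (ii) the condition $\delta\ge\w$ gives $\gamma\ge\w^\w$, hence $\lceil\gamma\rceil\ge\w_1>\w=\cov^\sharp(\overset\leftrightarrow\w)$, and the coarse invariance of $\cov^\sharp$ recorded earlier in the paper rules out $\bgamma\simeq\overset\leftrightarrow\w$ in case (ii).
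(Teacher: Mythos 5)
Your route is the paper's route: for $\gamma=\beta\cdot\w$ the paper exhibits the same large set $L=\{\beta\cdot n:n\in\w\}$ coarsely isomorphic to $\overset{\leftrightarrow}{\w}$ (you merely write out the verification), and in the remaining case it likewise invokes Theorem~\ref{char} after noting that $\bgamma$ is cellular, that $\add(\bgamma)=\cof(\bgamma)=\cf(\gamma)$, and that $\cov^\flat(\bgamma)=\cov^\sharp(\bgamma)=\lceil\gamma\rceil$. The difference is that the paper simply asserts this last equality, whereas you prove only the half $\cov^\sharp(\bgamma)\le\lceil\gamma\rceil$ and explicitly leave the lower bound $\cov^\flat(\bgamma)\ge\lceil\gamma\rceil$ open. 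That is a genuine gap: without it Theorem~\ref{char} cannot be applied, so the second bullet is not established; and your argument for the ``only if'' of the first bullet also leans on the unproved equality (the paper instead disposes of it via the remark that $2^{<\gamma}$ is cellular while $\overset{\leftrightarrow}{\w}$ is not, or one can observe directly that in the second case no $\e_\alpha$ has uniformly finite covering numbers, so $\cov^\sharp(\bgamma)>\w$).

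Worse, the step you flagged as ``the main technical obstacle'' cannot be carried out in the stated generality, so the gap is not a routine omission. Take $\gamma=\w^{\w_1+\w}=\w_1\cdot\w^{\w}$, which is additively indecomposable and not of the form $\beta\cdot\w$. For every $\beta<\gamma$, a ball $B(x,\e_\beta)$ of $\bgamma$ lies in an interval of ordinals of order type at most $\beta\cdot2+1<\w_1\cdot\w^{k}$ for some $k<\w$, and any such interval is covered by countably many sets $[c,c+\w_1)\subset B(c,\e_{\w_1})$; hence $\sup_{x}\cov_{\e_{\w_1}}(B(x,\e_\beta))\le\aleph_0$ for all $\beta<\gamma$, and therefore $\cov^\sharp(\bgamma)\le\aleph_1$. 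On the other hand $\lceil\gamma\rceil=|\gamma|^+=\aleph_2$, and by Proposition~\ref{cantor} $\cov^\sharp(2^{<\gamma})=\aleph_2$. Since $\cov^\sharp$ is invariant under coarse equivalence, $\bgamma$ is not coarsely equivalent to $2^{<\gamma}$ for this $\gamma$: the equality $\cov^\flat(\bgamma)=\cov^\sharp(\bgamma)=\lceil\gamma\rceil$ fails, and with it the second bullet (and the paper's own proof) breaks at exactly the point you identified. Your spacing idea (the points $x+(\alpha\cdot2+1)\cdot\xi$, $\xi<\kappa$, lie in no common $\e_\alpha$-ball) does settle the lower bound whenever $(\alpha\cdot2+1)\cdot\kappa<\gamma$ for all $\alpha<\gamma$ and all cardinals $\kappa<\lceil\gamma\rceil$ --- in particular when $\gamma$ is a cardinal or when $|\gamma|=\w$ --- so the claimed equivalence with $2^{<\gamma}$ is correct in those cases, but not for ordinals such as $\w^{\w_1+\w}$ or $\w^{\w_1\cdot2}$.
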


\begin{proof} If $\gamma=\beta\cdot \w$ for some ordinal $\beta$, then the ballean $\bgamma$ is coarsely equivalent to $\overset{\leftrightarrow}\w$ since $\bgamma$ contains the large subset $L=\{\beta\cdot n:n\in\w\}$, which is coarsely isomorphic to $\overset{\leftrightarrow}\w$.

Now assume that $\gamma\ne\beta\cdot\w$ for any ordinal $\beta$. Since $\gamma$ is additively indecomposable, this means that $\beta\cdot\w<\gamma$ for any ordinal $\beta<\gamma$, which implies that the ballean $\bgamma$ is cellular. Since $\add(\bgamma)=\cof(\bgamma)=\cf(\gamma)$ and $\cov^\flat(\bgamma)=\cov^\sharp(\bgamma)=\lceil\gamma\rceil$, the cellular ordinal ballean $\bgamma$ is coarsely equivalent to $2^{<\gamma}$ according to Theorem~\ref{char}.
\end{proof}

\begin{remark} Let us observe that for any ordinal $\gamma$ the balleans $2^{<\gamma}$ and $\overset{\leftrightarrow}\w$ are not coarsely equivalent since the ballean $2^{<\gamma}$ is cellular whereas $\overset{\leftrightarrow}\w$ is not.
\end{remark}

\section{Embedding cellular ordinal balleans into asymptotic products of cardinals}

In this section we construct coarse embeddings of cellular ordinal balleans into asymptotic products of cardinals. This embedding will play a crucial role in the proof of Theorem~\ref{main} presented in the next section.

Let us observe that for any transfinite sequence of cardinals $(\kappa_\alpha)_{\alpha<\gamma}$  the asymptotic product $\cbox_{\alpha<\gamma}\kappa_\alpha$ carries an operation of coordinatewise addition of sequences induces by the operation of addition of ordinals.  For ordinals $\beta<\gamma$ and $y\in\kappa_\alpha$ let
$y\cdot\delta_\beta$ denote the sequence $(x_\alpha)_{\alpha<\gamma}\in \cbox_{\alpha<\gamma}\kappa_\alpha$ such that $x_\alpha=y$ if $\alpha=\beta$ and $x_\alpha=0$ otherwise. It follows that each element $(x_\alpha)_{\alpha<\gamma}\in\cbox_{\alpha<\gamma}\kappa_\alpha$ can be written as $\sum_{\alpha\in A}x_\alpha\cdot\delta_\alpha$ for the finite set $A=\{\alpha<\gamma:x_\alpha\ne 0\}$.

The following lemma exploits and develops the decomposition technique used in \cite{P02}, \cite[\S10]{PB} and \cite{PT}.

\begin{lemma}\label{key} Let $X$ be an ordinal ballean of infinite cofinality $\gamma$ and $(\e_\alpha)_{\alpha<\gamma}$ be a well-ordered base of the coarse structure of $X$ consisting of cellular entourages such that $\e_\beta=\bigcup_{\alpha<\beta}\e_\alpha$ for all limit ordinals $\beta<\gamma$. For every $\alpha<\gamma$ and $x\in X$ let $\kappa_\alpha(x)=\cov_{\e_\alpha}(B(x,\e_{\alpha+1}))$ and put
$\kappa_\alpha=\min_{x\in X}\kappa_\alpha(x)$ and $\bar\kappa_\alpha=
\sup_{x\in X}\kappa_\alpha(x)$.
Then the ballean $X$ is coarsely equivalent to a subbalean $Y\subset\cbox_{\alpha<\gamma}\bar\kappa_\alpha$ containing the set $\cbox_{\alpha<\lambda}\kappa_\alpha$.
\end{lemma}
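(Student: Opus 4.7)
The plan is to build an injective coarse map $f\colon L\to\cbox_{\alpha<\gamma}\bar\kappa_\alpha$ on a large subballean $L\subset X$, encoding each $x\in L$ by the sequence whose $\alpha$-th coordinate is a label of the $\e_\alpha$-subclass $B(x,\e_\alpha)$ inside its $\e_{\alpha+1}$-parent $B(x,\e_{\alpha+1})$. I would first replace $X$ by $L$ consisting of one chosen point per $\e_0$-class; since $X=B(L,\e_0)$, $L$ is large (hence coarsely equivalent to $X$), and because every $\e_\alpha$-class meets $L$, the cardinals $\kappa_\alpha,\bar\kappa_\alpha$ computed in $L$ agree with those in $X$. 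Fix a base-point $y_0\in L$ and set $C_\alpha=B(y_0,\e_\alpha)$. The hypothesis $\e_\beta=\bigcup_{\alpha<\beta}\e_\alpha$ at limit $\beta$ then forces the ``distance'' $\tau(x)=\min\{\alpha:(x,y_0)\in\e_\alpha\}$ to be either $0$ or a successor $\beta(x)+1$.

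The heart of the construction is a transfinite recursion on $\alpha<\gamma$ producing, for every $\e_\alpha$-class $K$, a representative $r(K)\in K\cap L$ (with $r(C_\alpha)=y_0$) and, for every $\e_{\alpha+1}$-class $K$, an injection $\psi_K\colon\{\e_\alpha\text{-subclasses of }K\}\to\bar\kappa_\alpha$ sending the subclass that contains $r(K)$ to $0$. At a successor step $\alpha\to\alpha+1$ I put $r(K)=r(K_0)$ for a distinguished $\e_\alpha$-subclass $K_0\subset K$, taking $K_0=C_\alpha$ when $K=C_{\alpha+1}$; at a limit stage $\alpha$, for each $\e_\alpha$-class $K$ I pick $r(K)\in K\cap L$ directly (choosing $r(C_\alpha)=y_0$), so that the ``spine'' descending from $K$ is the chain $\bigl(B(r(K),\e_\beta)\bigr)_{\beta<\alpha}$. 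Finally I set
\[
f(x)_\alpha=\psi_{B(x,\e_{\alpha+1})}\bigl(B(x,\e_\alpha)\bigr).
\]

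The principal obstacle is verifying that $f(x)\in\cbox_{\alpha<\gamma}\bar\kappa_\alpha$, i.e., that $f(x)_\alpha=0$ for all but finitely many $\alpha$. For $\alpha\ge\tau(x)$ one has $B(x,\e_{\alpha+1})=C_{\alpha+1}$ and $r(C_{\alpha+1})=y_0\in C_\alpha=B(x,\e_\alpha)$, so $f(x)_\alpha=0$; the level $\alpha=\beta(x)$ always produces one forced nonzero coordinate. For $\alpha<\beta(x)$ the delicate step is to arrange the limit-stage choices so that, on cofinitely many levels $\alpha$, the spine descending from $B(x,\e_\lambda)$ (where $\lambda\le\tau(x)$ is the next limit above $\alpha$) coincides with the ball-chain of $x$ itself; the continuity $\e_\lambda=\bigcup_{\beta<\lambda}\e_\beta$ is exactly what makes all the relevant branching levels be attained, so the divergences of the spine from $x$'s chain are confined to a finite set governed by the successor ordinals at which $x$ is displaced. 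Once finite support is secured, the remaining verifications are routine: $f$ is injective because distinct $x,x'\in L$ lie in distinct $\e_0$-classes; $f$ and its inverse are coarse via the equivalence $(x,x')\in\e_\alpha\iff$ $f(x),f(x')$ agree on all coordinates $>\alpha$; and $Y=f(L)$ contains $\cbox_{\alpha<\gamma}\kappa_\alpha$ by decoding any finitely supported $s$ with $s_\beta<\kappa_\beta$ top-down, at each level $\beta$ picking the $\e_\beta$-subclass with $\psi$-label $s_\beta$ inside the current $\e_{\beta+1}$-class (such a subclass exists since $s_\beta<\kappa_\beta$ is strictly below the guaranteed branching count of every $\e_{\beta+1}$-class).
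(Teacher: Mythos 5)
Your architecture coincides with the paper's: label the $\e_\alpha$-subclasses of each $\e_{\alpha+1}$-class by ordinals below $\bar\kappa_\alpha$, reserve the label $0$ for a distinguished subclass, and send $x$ to its sequence of labels. But there is a genuine gap exactly at the point you single out as the principal obstacle: finite support is not secured, and the mechanism you propose for securing it cannot work. For $\alpha$ below a limit ordinal $\lambda$, the coordinate $f(x)_\alpha$ vanishes if and only if the distinguished $\e_\alpha$-subclass of $B(x,\e_{\alpha+1})$ equals $B(x,\e_\alpha)$; this depends only on the choices of distinguished subclasses made at the successor stages below $\lambda$, and not at all on the representatives $r(K)$ you pick ``directly'' at the limit stage $\lambda$, since those are chosen afterwards and only propagate upward. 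Hence ``arranging the limit-stage choices'' cannot repair an incoherent family of successor-stage choices, and for arbitrary successor-stage choices finite support genuinely fails: if at every level $\alpha$ one distinguishes, inside the $\e_{\alpha+1}$-class of a fixed point $x_0$, a subclass avoiding $x_0$ (possible whenever that class has at least two $\e_\alpha$-subclasses), then $f(x_0)_\alpha\ne 0$ for every such $\alpha$. The continuity hypothesis $\e_\lambda=\bigcup_{\beta<\lambda}\e_\beta$ guarantees that $d(x,y)=\min\{\alpha:(x,y)\in\e_\alpha\}$ is never a limit ordinal, but it puts no constraint whatsoever on how the branching choices are made.

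The missing device, which is the actual content of the paper's proof, is a single global well-ordering $\preceq$ of $X$: inside each $\e_{\alpha+1}$-class $B$ one distinguishes the $\e_\alpha$-subclass containing the $\preceq$-least element $\min B$. Then for each $x$ the transfinite sequence $(\min B(x,\e_\alpha))_{\alpha<\gamma}$ is $\preceq$-non-increasing (because the balls increase with $\alpha$), hence takes only finitely many values and strictly decreases at only finitely many places; and $f(x)_\alpha\ne 0$ forces $\min B(x,\e_{\alpha+1})\notin B(x,\e_\alpha)$, i.e.\ a strict decrease at $\alpha$. The paper packages the same idea as a recursion $y\mapsto \min B(y,\e_{d^-(x,y)})$ which strictly decreases the ordinal $d$ and therefore terminates after finitely many steps, each step contributing at most one nonzero coordinate. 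With this one repair your outline does become essentially the paper's argument; the remaining verifications (injectivity on your set $L$ of $\e_0$-representatives, coarseness of $f$ and $f^{-1}$ via agreement of coordinates $\ge\alpha$, and the decoding giving $\cbox_{\alpha<\gamma}\kappa_\alpha\subset f(X)$) are then as routine as you indicate.
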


\begin{proof} For each two points $x,y\in X$ let
$$d(x,y)=\min\{\alpha<\gamma:(x,y)\in\e_\alpha\}$$ and observe that for any pair $(x,y)\notin\e_0$ the ordinal $d(x,y)$ is not limit (as $\e_\beta=\bigcup_{\alpha<\beta}\e_\alpha$ for any limit ordinal $\beta<\gamma$). Consequently we can find an ordinal $d^-(x,y)$ such that $d(x,y)=d^-(x,y)+1$.

Fix any well-ordering $\preceq$ of the set $X$. Given a non-empty subset $B\subset X$ denote by $\min B$  the smallest point of $B$ with respect to the well-order $\preceq$ and for every $\alpha<\gamma$ let $c_\alpha\colon X\to X$ be the map assigning to each point $x\in X$ the smallest element $c_\alpha(x)=\min B(x,\e_\alpha)$ of the ball $B(x,\e_\alpha)$. Since $\e_\alpha$ is an equivalence relation,
 $B(x,\e_\alpha)=B(c_\alpha(x),\e_\alpha)$. To simplify the notation in the sequel we shall denote the ball $B(x,\e_\alpha)$ by $B_\alpha(x)$.

Observe that for every $\alpha<\gamma$ and ball $B\in\{B_{\alpha+1}(x):x\in X\}$ the set $c_\alpha(X)\cap B$ has cardinality $\kappa_\alpha(\min B)$, so we can fix a map $n_{\alpha,B}:B\to \kappa_\alpha(\min B)$ such that $\{B_\alpha(x):x\in B\}=\{n^{-1}_{\alpha,B}(\beta):\beta\in\kappa_\alpha(\min B)\}$ and $n^{-1}_{\alpha,B}(0)=B_\alpha(\min B)$. Finally, define a map $n_\alpha\colon X\to \bar\kappa_\alpha$ assigning to each point $y\in X$ the number $n_\alpha(y):=n_{\alpha,B_{\alpha+1}(y)}(y)$ of the $\e_\alpha$-ball containing $y$ in the partition of the $\e_{\alpha+1}$-ball $B_{\alpha+1}(y)$.
The definition of the cardinal $\kappa_\alpha$ implies that $\kappa_\alpha\subset \kappa_\alpha(x)= n_\alpha(B_{\alpha+1}(x))$ for every $x\in X$.

For every $x\in X$ define a map $f_x\colon X\to \cbox_{\alpha<\gamma}\bar\kappa_\alpha$ by the recursive formula
$$
f_x(y)=\begin{cases}
0&\mbox{ if $d(x,y)=0$};\\
f_{\min B_{d^-(x,y)}(y)}(y)+n_{d^-(x,y)}(y)\cdot\delta_{d^-(x,y)}&\mbox{otherwise}.
\end{cases}
$$
Since $d(\min B_{d^-(x,y)}(y),y)<d(x,y)$ the function $f_x$ is well-defined.

It can be shown that for every $x\in X$ the function $f_x\colon X\to \cbox_{\alpha<\gamma}\bar\kappa_\alpha$
determines a coarse equivalence of $X$ with the subspace $f(X)$ of  $\cbox_{\alpha<\gamma}\bar\kappa_\alpha$ containing the asymptotic product $\cbox_{\alpha<\gamma}\kappa_\alpha$.
\end{proof}

\section{Proof of Theorem~\ref{main}}\label{s:pfmain}

 Assume that $X,Y$ are two cellular balleans with $\gamma=\add(X)=\cof(X)=\cof(Y)=\add(X)$ and $\kappa=\cov^\flat(X)=\cov^\sharp(X)=\cov^\sharp(Y)=\cov^\flat(Y)$ for some cardinals $\gamma$ and $\kappa$. Let $\E_X$, $\E_Y$ denote the ballean structures of $X$ and $Y$, respectively.

Separately we shall consider 4 cases.
\smallskip

1) $\gamma=0$. In this case the balleans  $X,Y$ are empty and hence coarsely equivalent.
\smallskip

2) $\gamma=1$. In this case the balleans $X,Y$ are bounded and hence are coarsely equivalent.
\smallskip

3) $\gamma=\w$. Since $X$ is a cellular ballean with $\cof(X)=\gamma=\w$, the coarse structure ${\downarrow}\E_X$ of $X$ has a well-ordered base $\{\e_n\}_{n\in\w}$ consisting of equivalence relations such that $\e_0=\Delta_X$. In this case the formula
$$d_X(x,x')=\min\{n\in\w:(x,x')\in\e_n\}$$defines an ultrametric $d_X\colon X\times X\to\w$ generating the coarse structure of the ballean $X$. By analogy we can define an ultrametric $d_Y$ generating the coarse structure of the ballean $Y$. Since $\cov^\flat(X)=\cov^\sharp(X)=\cov^\sharp(Y)=\cov^\flat(Y)$, we can apply Theorem~1.2 of \cite{BR} (proved by the technique of towers created in \cite{BZ}) to conclude that the ultrametric spaces $X$ and $Y$ are coarsely equivalent.
\smallskip

4) $\gamma>\w$.  Since $X,Y$ are ordinal balleans of cofinality $\cof(X)=\cof(Y)=\gamma$, we can fix well-ordered bases $\{\tilde\e_\alpha\}_{\alpha<\gamma}$ and $\{\tilde\delta_\alpha\}_{\alpha<\gamma}$ of the coarse structures ${\downarrow}\E_X$ and ${\downarrow}\E_Y$, respectively.

By induction on $\alpha<\gamma$ we shall construct well-ordered sequences $\{\e_\alpha\}_{\alpha<\gamma}\subset{\downarrow}\E_X$ and $\{\delta_\alpha\}_{\alpha<\gamma}\subset{\downarrow}\E_Y$ such that for every $\alpha<\gamma$ the following conditions will be satisfied:
\begin{enumerate}
\item[(a)] $\e_\alpha=\bigcup_{\beta<\alpha}\e_\beta$ and $\delta_\alpha=\bigcup_{\beta<\alpha}\delta_\beta$ if the ordinal $\alpha$ is limit;
\item[(b)] $\e_\alpha$ and $\delta_\alpha$ are cellular entourages;
\item[(c)] $\tilde \e_\alpha\subset\e_{\alpha+1}$ and $\tilde\delta_\alpha\subset\delta_{\alpha+1}$;
\item[(d)] $\min\limits_{x\in X}\cov_{\e_\alpha}(B(x,\e_{\alpha+1})){=}\sup\limits_{x\in X}\cov_{\e_\alpha}(B(x,\e_{\alpha+1})){=}\min\limits_{y\in Y}\cov_{\delta_\alpha}(B(y,\delta_{\alpha+1})){=}
\sup\limits_{y\in Y}\cov_{\delta_\alpha}(B(y,\delta_{\alpha+1})){=}\kappa_\alpha$ for some cardinal $\kappa_\alpha$.
\end{enumerate}

We start the inductive construction by choosing cellular entourages $\e_0\in\E_X$ and $\delta_0\in\E_Y$ such that $$\sup_{x\in X}\cov_{\e_0}(B(x,\e))<\kappa\mbox{ \ and \ }\sup_{y\in Y}\cov_{\delta_0}(B(y,\delta))<\kappa$$ for any entourages $\e\in{\downarrow}\E_X$ and $\delta\in{\downarrow}\E_Y$. The existence of such entourages $\e_0$ and $\delta_0$ follows from the cellularity of $X,Y$ and the definition of the cardinals $\cov^\sharp(X)=\cov^\sharp(Y)=\kappa$.
Assume that for some ordinal $\alpha<\gamma$ and all ordinals $\beta<\alpha$ the cellular entourages $\e_\beta$ and $\delta_\beta$ have already been constructed. If the ordinal $\alpha$ is limit, then we put $\e_\alpha=\bigcup_{\beta<\alpha}\e_\beta$ and $\delta_\alpha=\bigcup_{\beta<\alpha}\delta_\beta$ and observe that the entourages $\e_\alpha$ and $\delta_\beta$ are cellular as unions of increasing chains of cellular entourages. Moreover, $\e_\alpha\in{\downarrow}\E_X$ and $\delta_\beta\in{\downarrow}\E_Y$ as $\alpha<\gamma=\add(X)=\add(Y)$.

Next, assume that the ordinal $\alpha$ is not limit and hence $\alpha=\beta+1$ for some ordinal $\beta$. Taking into account the choice of the entourages $\e_0$, $\delta_0$ and using the definitions of the cardinals $\cov^\flat(X)=\cov^\flat(Y)$, we can construct two increasing sequences of cellular entourages $\{\e'_{n}\}_{n\in\w}\subset{\downarrow}\E_X$ and $\{\delta'_{n}\}_{n\in\w}\subset{\downarrow}\E_Y$ such that
$$\sup_{x\in X}\cov_{\e'_n}(B(x,\e'_{n+1}))\le\min_{y\in Y}\cov_{\delta'_n}(B(y,\delta'_{n+1}))$$
and $$\sup_{y\in Y}\cov_{\delta'_n}(B(y,\delta'_{n+1}))\le\min_{x\in X}\cov_{\e'_{n+1}}(B(x,\e'_{n+2})).$$
The entourages $\e'_1$ and $\delta'_1$ can be chosen so that $\tilde\e_\alpha\subset\e'_1$ and $\tilde\delta_\alpha\subset\delta'_1$. Since $\add(X)=\add(Y)>\w$, the entourages $\e_{\alpha+1}=\bigcup_{n\in\w}\e'_n$ and $\delta_{\alpha+1}=\bigcup_{n\in\w}\delta'_n$ belong to the coarse structures ${\downarrow}\E_X$ and ${\downarrow}\E_Y$, respectively, and have the properties (b)--(d), required in the inductive construction.

By Lemma~\ref{key}, there are coarse equivalences $f_X\colon X\to\cbox_{\alpha<\gamma}\kappa_\alpha$ and $f_Y\colon Y\to\cbox_{\alpha<\gamma}\kappa_\alpha$. Then the multi-map $f_Y^{-1}\circ f_X\colon X\multimap Y$ is a coarse equivalence between the balleans $X$ and $Y$.\hfill $\square$

\end{document}